\newtheorem{theorem}{Theorem}[section]
\newtheorem{proposition}[theorem]{Proposition}
\newtheorem{lemma}[theorem]{Lemma}
\newtheorem{conjecture}[theorem]{Conjecture}
\newtheorem{corollary}[theorem]{Corollary}
\theoremstyle{definition}
\newtheorem{definition}[theorem]{Definition}
\newtheorem{example}[theorem]{Example}
\newtheoremstyle{named}{}{}{\itshape}{}{\bfseries}{.}{.5em}{#1 \thmnote{#3}}
\theoremstyle{named}
\newtheorem*{namedtheorem}{Theorem}
\title[Principal specialization of dual characters of flagged Weyl~modules]{Principal specialization of dual characters \\ of flagged Weyl~modules}
\author{Karola M\'esz\'aros}
\address{Karola M\'esz\'aros, Department of Mathematics, Cornell University, Ithaca, NY 14853.  \newline\textup{karola@math.cornell.edu}
}
\author{Avery St.~Dizier}
\address{Avery St.~Dizier, Department of Mathematics, University of Illinois at Urbana-Champaign, Urbana, IL 61801.  \newline\textup{stdizie2@illinois.edu}}
\author{Arthur Tanjaya}
\address{Arthur Tanjaya, Department of Mathematics, Cornell University, Ithaca NY 14853.  \newline\textup{amt333@cornell.edu}
}
\thanks{Karola M\'esz\'aros is partially  supported by  CAREER NSF Grant DMS-1847284. Avery St.~Dizier received support from NSF Grant DMS-2002079.}
\begin{document}
	\maketitle
	\begin{abstract}
%		Stanley (2017) conjectured that the principal specialization of the Schubert polynomial of a permutation $w$ equals two if and only if $w$ exactly one 132-pattern.
%%		 $\S_w(1, \ldots, 1)$ 
%		Shortly thereafter, Weigandt proved Stanley's conjecture and generalized it, showing that the principal specialization of the Schubert polynomial of $w$ is bounded below by one plus the number of $132$-patterns in $w$.
%%		using the combinatorial pipe dream model for Schubert polynomials. 
%		We prove a generalization of Weigandt's result to dual characters of flagged Weyl modules, a family of polynomials containing all Schubert and key polynomials.
		Schur polynomials are special cases of Schubert polynomials, which in turn are special cases of dual characters of flagged Weyl modules. The principal specialization of Schur and Schubert polynomials has a long history, with Macdonald famously expressing the principal specialization of any Schubert polynomial in terms of reduced words. We study the principal specialization of dual characters of flagged Weyl~modules. Our result yields an alternative proof of a conjecture of Stanley about  the principal specialization of Schubert polynomials, originally proved by Weigandt.
%		the principal specialization of the Schubert polynomial of a permutation $w$
%		is bounded below by one plus the number of $132$-patterns in~$w$.  
	 \end{abstract}
		
	\section{Introduction}
	Schubert polynomials $\mathfrak{S}_w$ were introduced by Lascoux and Sch\"utzenberger in \cite{LS1} as distinguished polynomial representatives for the cohomology classes of Schubert cycles in the flag variety. Schubert polynomials generalize Schur polynomials, a classical basis of the ring of symmetric functions. 
	
	The principal specialization of Schur polynomials has a long history: $s_{\lambda}(1,\ldots,1)$ counts the number of semistandard Young tableaux of shape $\lambda$, a number famously enumerated by the hook-content formula, see for instance \cite{EC2}. Macdonald \cite[Eq. 6.11]{macdonald} famously expressed the principal specialization $\mathfrak{S}_{w}(1,\ldots,1)$ of the Schubert polynomial $\mathfrak{S}_{w}$  in terms of the reduced words of the permutation $w$. Fomin and Kirillov \cite{reduced} placed this expression in the context of plane partitions for dominant permutations, while after two decades Billey et al. \cite{bijmac} provided a combinatorial proof.  Principal specialization of Schubert polynomials has inspired a flurry of recent interest \cite{shenanigans, anna132,yibo, arthur, pak}.
	A major catalyst for the current line of study into $\mathfrak{S}_w(1,\ldots,1)$ is the following result of Weigandt, which generalizes a conjecture of Stanley (\cite[Conjecture 4.1]{shenanigans}).
	\begin{theorem}[{\cite[Theorem 1.1]{anna132}}]
		\label{thm:schubprinspec}
		For any permutation $w\in S_n$, 
		\[\mathfrak{S}_w(1,\ldots,1)\geq 1+p_{132}(w), \]
		where $p_{132}(w)$ is the number of 132-patterns in $w$.
	\end{theorem}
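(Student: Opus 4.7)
The plan is to leverage the general framework for dual characters of flagged Weyl modules developed in this paper. Since $\mathfrak{S}_w$ is the dual character of the flagged Weyl module $\mathcal{M}_{D(w)}$ associated to the Rothe diagram $D(w)$, the Billey--Jockusch--Stanley pipe-dream formula gives
\[
\mathfrak{S}_w(1,\ldots,1) \;=\; |\RPD(w)|,
\]
the number of reduced pipe dreams of $w$. It therefore suffices to exhibit $1+p_{132}(w)$ distinct reduced pipe dreams. The $+1$ will come from the bottom pipe dream $P_0(w)$, whose crosses sit exactly at $D(w)$.

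For each 132-pattern $i<j<k$ with $w_i<w_k<w_j$, I would construct an additional reduced pipe dream $P_{i,j,k}$ by a local ladder or chute move anchored at the cross in position $(j,w_k)$. The 132 condition is precisely what permits the move: $(j,w_k)\in D(w)$ because $w_j>w_k$ and $w^{-1}(w_k)=k>j$, while $(i,w_k)\notin D(w)$ because $w_i<w_k$, so the cross at $(j,w_k)$ has room to be lifted toward row $i$ without violating reducedness.

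The central obstacle is injectivity of the assignment $(i,j,k)\mapsto P_{i,j,k}$. Two patterns with distinct $k$ modify different columns $w_k$, so collisions are only possible among patterns sharing a common $k$. For these I would canonicalize the move (for example, raise the cross to the uppermost empty cell in column $w_k$ at a row $\leq i$) and then recover $(i,j)$ from $P_{i,j,k}$ by overlaying it with $P_0(w)$ and reading off the source and destination rows of the shift. Verifying that each $P_{i,j,k}$ is itself a reduced pipe dream for $w$ is a routine consequence of the standard preservation properties of chute and ladder moves. Assembling the base case with the $p_{132}(w)$ new pipe dreams yields the inequality.

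An alternative, more representation-theoretic implementation of the same strategy stays entirely inside $\mathcal{M}_{D(w)}$: one would seek to exhibit $1+p_{132}(w)$ linearly independent vectors (or weight-multiplicity-preserving flags), one canonical and one for each 132-pattern, with the 132 condition again furnishing the nontrivial minor needed to produce each extra independent direction. Either implementation reduces the problem to a local, pattern-by-pattern construction whose hard step is the global injectivity analysis.
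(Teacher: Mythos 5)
Your proposal follows Weigandt's original pipe-dream strategy rather than the route this paper takes, and as written it has genuine gaps. Two problems already occur in the setup: the bottom pipe dream of $w$ has its crosses at the left-justified positions of the Lehmer code, not at the Rothe diagram $D(w)$ (these coincide only for dominant $w$), and ``lifting a cross to a higher row in the same column'' is not a legal pipe-dream operation --- chute and ladder moves necessarily change the column of the cross --- so the object $P_{i,j,k}$ is not well defined, and its reducedness is not a routine consequence of standard facts. More seriously, the injectivity step that you yourself flag as the central obstacle is exactly the hard content of Weigandt's proof, and your sketch does not supply it; indeed the canonicalization you suggest (raise the cross in column $w_k$ to the uppermost empty cell in a row $\leq i$) forgets $i$ and collapses distinct patterns sharing the same $j$ and $k$: for $w=1243$ the patterns $(1,3,4)$ and $(2,3,4)$ would be sent to the same object. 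The alternative ``representation-theoretic implementation'' is stated only as an intention, so neither version constitutes a proof.

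The one step you get exactly right --- that a 132-pattern $i<j<k$ produces a box $(j,w_k)\in D(w)$ with an empty cell $(i,w_k)\notin D(w)$ above it in the same column --- is precisely the bijection the paper uses to show $p_{132}(w)=\mathrm{rank}(D(w))$ (Corollary \ref{cor:annarelation}). But the paper then stays in the diagram model and never touches pipe dreams: moving boxes upward within columns produces diagrams $C\leq D(w)$, Theorem \ref{thm:monomials} guarantees each such $C$ contributes the monomial $x^C$ to $\chi_{D(w)}=\mathfrak{S}_w$ (Theorem \ref{thm:kp}), and instead of one object per pattern it builds a strictly increasing chain of diagrams realizing every rank value (Lemma \ref{lem:chain}), whose monomials are automatically pairwise distinct (Lemma \ref{lem:obvious}). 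The chain structure is what makes the distinctness trivial and is the real point of the paper's argument, which moreover proves the more general bound $\chi_D(1,\dots,1)\geq \mathrm{rank}(D)+1$ for arbitrary diagrams. To salvage your route you would either need to carry out Weigandt's injection of 132-patterns into non-bottom reduced pipe dreams in full, or recast your column-lifting idea in terms of the order $C\leq D$, which is exactly what the paper does.
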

 
 	Weigandt's proof of Theorem \ref{thm:schubprinspec} works by exploiting the structure of pipe dreams, one of the earliest combinatorial models for Schubert polynomials \cite{laddermoves,FKschub}. We give an alternative proof of Theorem \ref{thm:schubprinspec} by generalizing its statement to the setting of dual characters of flagged Weyl modules of diagrams: 
 
 	\begin{theorem} 
 		\label{thm:main} For any diagram $D$, the dual character $\chi_D$  of the flagged Weyl module of $D$ satisfies 
 		\[\chi_D(1, \dots, 1) \geq \textup{rank}(D) + 1.\]
 	\end{theorem}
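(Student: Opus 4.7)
The plan is to exhibit $\textup{rank}(D)+1$ distinct monomials appearing with positive coefficient in $\chi_D$. Since $\chi_D$ has a positive monomial expansion (for example via the Kraskiewicz--Pragacz tableaux model for flagged Weyl modules, or via a Kohnert-type formula), the value $\chi_D(1,\ldots,1)$ equals a sum of positive integers, and so is bounded below by the cardinality of the support of $\chi_D$ in $\mathbb{Z}_{\geq 0}^n$. It therefore suffices to produce $\textup{rank}(D)+1$ distinct weight vectors in that support.

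First, I would identify a canonical ``base'' monomial: the column-weight filling where every cell $(i,j) \in D$ receives entry $i$, producing the weight $\alpha(D)$ whose $i$-th coordinate is $|\{j : (i,j) \in D\}|$. This filling is always valid (it is the unique minimal filling in the flagged-tableau sense), so the monomial $x^{\alpha(D)}$ always lies in the support of $\chi_D$ and supplies the ``$+1$''.

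Next, anticipating that $\textup{rank}(D)$ counts certain configurations of cells in $D$ (generalizing 132-patterns, since $\textup{rank}(D(w))$ specializes to $p_{132}(w)$ so that Theorem~\ref{thm:schubprinspec} follows), for each such configuration $c$ I would construct a distinct monomial $x^{\alpha_c}$ in the support of $\chi_D$. The construction would be a local modification of the base filling near $c$---for instance a Kohnert-type move shifting a single box upward in its column, or equivalently decrementing one entry and incrementing a compatible partner---yielding a weight $\alpha_c$ differing from $\alpha(D)$ in coordinates read off from the rows involved in $c$. Validity of the new filling will follow from the fact that the configuration $c$ provides exactly the combinatorial room needed to perform this move inside $D$.

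The main obstacle is injectivity: ensuring that distinct configurations $c$ produce distinct $\alpha_c$, and that no $\alpha_c$ coincides with $\alpha(D)$. I expect to resolve this by fixing a canonical linear order on rank-contributing configurations and defining the modification so that the pair of coordinates affected, together with the direction and magnitude of the shift, uniquely determines $c$. This should reduce injectivity to a combinatorial claim about how 132-like patterns in $D$ are indexed by pairs of rows and columns, at which point the argument becomes a bookkeeping statement about the pattern structure of $D$.
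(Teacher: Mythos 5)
Your overall strategy---exhibit $\textup{rank}(D)+1$ distinct exponent vectors in the support of $\chi_D$---is the right one, and the ``$+1$'' part is fine: the monomial $x^D$ (your base filling) is always in the support, and positivity of coefficients is automatic since $\chi_D$ is a character, so each support monomial contributes at least $1$ to $\chi_D(1,\ldots,1)$. (A small caution: a Kohnert-type or Kraskiewicz--Pragacz model is not available for arbitrary diagrams, but you do not need one; the paper's Theorem~\ref{thm:monomials} identifies the support as exactly $\{x^C \mid C\leq D\}$, which suffices.)

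The genuine gap is the step you defer to ``bookkeeping'': assigning to each rank-contributing configuration a \emph{single} local box move from the base filling and claiming the resulting weights are pairwise distinct. This fails already for $D=(\{2\},\{2\})$: both boxes have rank $1$ (row $1$ is missing above each), so $\textup{rank}(D)=2$, but moving either box up by one produces the same weight $x_1x_2$; the affected coordinates, direction, and magnitude of the shift are identical for the two configurations, so no canonical ordering of configurations can separate them within your scheme. The third required monomial, $x_1^2$, is only reached by moving \emph{both} boxes, i.e., by composing moves. This is precisely how the paper proceeds: Lemma~\ref{lem:chain} builds a chain $C^0<C^1<\cdots<C^{r-1}<D$ in which each diagram is obtained from the next by one upward box move (so moves accumulate and the rank drops by exactly one at each step), and Lemma~\ref{lem:obvious} shows via the inverse lexicographic order that strictly comparable diagrams have distinct monomials $x^C$. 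If you replace your ``one move per configuration'' map by such a cumulative chain and prove a distinctness statement of the Lemma~\ref{lem:obvious} type, your argument closes and essentially becomes the paper's proof; as written, the injectivity claim is false, not merely unproven.
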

 
	We show in Corollary \ref{cor:annarelation} that Theorem \ref{thm:main} specializes to Theorem \ref{thm:schubprinspec}. Additionally, Theorem \ref{thm:main} implies an analogous result for key polynomials (Corollary \ref{cor:keys}).
%	Schubert polynomials are known to occur as dual characters $\chi_D$ whenever $D=D(w)$ is the Rothe diagram of a permutation $w$, and the number of $132$-patterns in $w$ equals the rank of $D(w)$.
	
	\subsection*{Outline of this paper}
	In Section \ref{sec:background} we  define dual characters of flagged Weyl modules of diagrams, and we provide necessary background. In Section \ref{sec:lower}, we define the rank of a diagram and prove Theorem \ref{thm:main}. We characterize the case of equality in Theorem \ref{thm:main} and connect to zero-one polynomials. We conclude in Section \ref{sec:upper} by describing a simple upper bound version of Theorem \ref{thm:main}, and conjecturing a characterization for when equality holds.
 
	\section{Background}
	\label{sec:background}
	We first define flagged Weyl modules and their dual characters. We then recall the definition of Schubert polynomials and the connection between Schubert polynomials and dual characters.  The exposition of this section follows that of \cite{zeroone}.
	 
	By a \emph{diagram}, we mean a sequence $D=(C_1,C_2,\ldots,C_n)$ of finite subsets of $[n]$, called the \emph{columns} of $D$. We interchangeably think of $D\subseteq [n]\times [n]$ as a collection of boxes $(i,j)$ in a grid, viewing an element $i\in C_j$ as a box in row $i$ and column $j$ of the grid. When we draw diagrams, we read the indices as in a matrix: $i$ increases top-to-bottom and $j$ increases left-to-right. 
	%Two diagrams $D$ and $D'$ are called \emph{column-equivalent} if one is obtained from the other by reordering nonempty columns and adding or removing any number of empty columns. For a column $C\subseteq [n]$, let the \emph{multiplicity} $\mathrm{mult}_D(C)$ be the number of columns of $D$ which are equal to $C$. The sum of diagrams, denoted $D\oplus D'$, is constructed by concatenating the lists of columns; graphically this means placing $D'$ to the right of $D$. 

	Let $G=\mathrm{GL}(n,\mathbb{C})$ be the group of $n\times n$ invertible matrices over $\mathbb{C}$ and $B$ be the subgroup of $G$ consisting of the $n\times n$ upper-triangular matrices. The flagged Weyl module is a representation of $B$ associated to a diagram $D$. The flagged Weyl module of $D$ will be denoted by $\mathcal{M}_D$. We will use the construction of $\mathcal{M}_D$ in terms of determinants given in \cite{magyar}.
	
	Denote by $Y$ the $n\times n$ matrix with indeterminates $y_{ij}$ in the upper-triangular positions and zeros elsewhere. Let $\mathbb{C}[Y]$ be the polynomial ring in the indeterminates $\{y_{ij}\}_{i\leq j}$. Note that $B$ acts on $\mathbb{C}[Y]$ on the right via left translation: if $f(Y)\in \mathbb{C}[Y]$, then a matrix $b\in B$ acts on $f$ by $f(Y)\cdot b=f(b^{-1}Y)$. For any $R,S\subseteq [n]$, let $Y_S^R$ be the submatrix of $Y$ obtained by restricting to rows $R$ and columns $S$.
	
	For $R,S\subseteq [n]$, we say $R\leq S$ if $\#R=\#S$ and the $k$\/th least element of $R$ does not exceed the $k$\/th least element of $S$ for each $k$. For any diagrams $C=(C_1,\ldots, C_n)$ and $D=(D_1,\ldots, D_n)$, we say $C\leq D$ if $C_j\leq D_j$ for all $j\in[n]$.

	\begin{definition}
		For a diagram $D=(D_1,\ldots, D_n)$, the \emph{flagged Weyl module} $\mathcal{M}_D$ is defined by
		\[\mathcal{M}_D=\mathrm{Span}_\mathbb{C}\left\{\prod_{j=1}^{n}\det\left(Y_{D_j}^{C_j}\right)\ \middle|\   C\leq D \right\}. \]
		$\mathcal{M}_D$ is a $B$-module with the action inherited from the action of $B$ on $\mathbb{C}[Y]$. 
	\end{definition}
	Note that since $Y$ is upper-triangular, the condition $C\leq D$ is technically unnecessary since $\det\left(Y_{D_j}^{C_j}\right)=0$ unless $C_j\leq D_j$. Conversely, if $C_j\leq D_j$, then $\det\left(Y_{D_j}^{C_j}\right)\neq 0$. 
	
	For any $B$-module $N$, the \emph{character} of $N$ is defined by $\mathrm{char}(N)(x_1,\ldots,x_n)=\mathrm{tr}\left(X:N\to N\right)$ where $X$ is the diagonal matrix with diagonal entries $x_1,\ldots,x_n$, and $X$ is viewed as a linear map from $N$ to $N$ via the $B$-action. Define the \emph{dual character} of $N$ to be the character of the dual module $N^*$:
	\begin{align*}
		\mathrm{char}^*(N)(x_1,\ldots,x_n)&=\mathrm{tr}\left(X:N^*\to N^*\right) \\
		&=\mathrm{char}(N)(x_1^{-1},\ldots,x_n^{-1}).
	\end{align*}
	\begin{definition}
		For a diagram $D\subseteq [n]\times [n]$, let $\chi_D=\chi_D(x_1,\ldots,x_n)$ be the dual character 
		\[\chi_D=\mathrm{char}^*\mathcal{M}_D. \] 
	\end{definition}

	\begin{example}
		Let $D$ be the diagram 
		\begin{center}
			\includegraphics[scale=1.25]{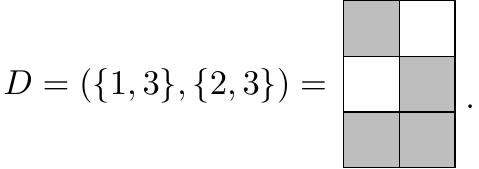}
		\end{center}
		Then the diagrams $C$ with $C\leq D$ are 
		\begin{center}
			\includegraphics[scale=1.25]{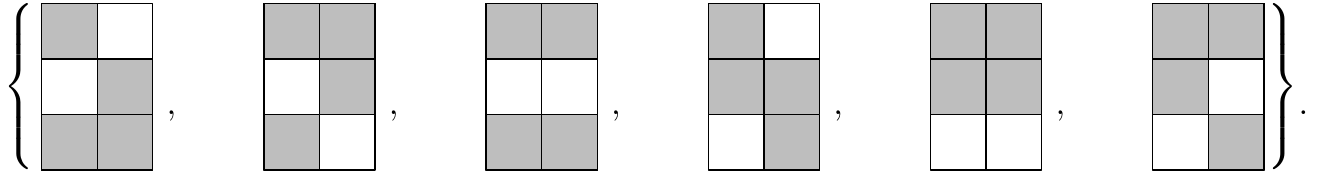}
		\end{center}
		The corresponding determinants are 	
		\[y_{11}y_{22}y_{33}^2, \quad 
		y_{11}y_{12}y_{23}y_{33} - y_{11}y_{13}y_{22}y_{33} \quad 
		y_{11}y_{12}y_{33}^2 \quad 
		y_{11}y_{22}y_{23}y_{33} \quad
		y_{11}y_{12}y_{23}^2 - y_{11}y_{13}y_{22}y_{23}\quad 
		y_{11}y_{12}y_{23}y_{33}\]
		These determinants are all linearly independent eigenvectors of $X$, so 
		\[\chi_D(x_1,x_2,x_3) = x_1x_2x_3^2+2x_1^2x_2x_3+x_1^2x_3^2+x_1x_2^2x_3+x_1^2x_2^2. \]
	\end{example}

	\begin{definition}
		For any diagram $D\subseteq [n] \times [n]$ with columns $D_1,\ldots,D_n$, we write $x^D$ for the monomial
		\[x^D = \prod_{j=1}^{n}\prod_{i\in D_j}x_i. \]
	\end{definition}

	The following two easy results describe the supports and coefficients of dual characters of diagrams.
	\begin{theorem}[cf. {\cite[Theorem 7]{FMS}}]
		\label{thm:monomials}
		For any diagram $D\subseteq [n]\times [n]$, the monomials appearing in $\chi_D$ are exactly 
		\[\left\{x^D\ \middle|\   C\leq D \right\}.\]
	\end{theorem}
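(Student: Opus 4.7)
The plan is to compute $\chi_D$ directly from the spanning set of $\mathcal{M}_D$ by tracking the weights of the basis-like vectors $v_C := \prod_{j=1}^n \det(Y_{D_j}^{C_j})$ under the torus action. Recall that for $b \in B$, $f(Y) \cdot b = f(b^{-1}Y)$, so the diagonal matrix $X = \mathrm{diag}(x_1,\ldots,x_n)$ sends $y_{ij} \mapsto x_i^{-1} y_{ij}$. Inside $\det(Y_{D_j}^{C_j})$, the row indexed by $i \in C_j$ is uniformly scaled by $x_i^{-1}$, so the determinant is an eigenvector of eigenvalue $\prod_{i \in C_j} x_i^{-1}$. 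Multiplying over $j$ yields $v_C \cdot X = x^{-C} \cdot v_C$, where $x^C = \prod_{j} \prod_{i \in C_j} x_i$. Consequently $v_C$ lies in the $x^{-C}$ weight space of $\mathcal{M}_D$, and so, upon dualizing, contributes the monomial $x^C$ to $\chi_D = \mathrm{char}^*(\mathcal{M}_D)$.

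The next step is to pin down which $v_C$ are nonzero. This is exactly the observation already recorded in the paper: upper-triangularity of $Y$ forces $\det(Y_{D_j}^{C_j}) = 0$ unless $C_j \leq D_j$, and conversely $C_j \leq D_j$ makes $\det(Y_{D_j}^{C_j})$ nonzero (it has a nonvanishing leading monomial). Hence $v_C \neq 0$ precisely when $C \leq D$. Combining with the weight computation gives both containments of the theorem. For one direction, since the $v_C$ with $C \leq D$ span $\mathcal{M}_D$ and every such $v_C$ lies in the $x^{-C}$ weight space, the only monomials that can possibly appear in $\chi_D$ are of the form $x^C$ with $C \leq D$. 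For the other direction, for each $C \leq D$ the weight space with weight $x^{-C}$ contains the nonzero vector $v_C$, so its dimension is at least $1$, which means $x^C$ appears in $\chi_D$ with positive coefficient.

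The main subtlety to watch is that distinct diagrams $C \neq C'$ can produce the same weight $x^C = x^{C'}$, because the weight sees only the row multiset $(\#\{j : i \in C_j\})_{i \in [n]}$ rather than the columnwise data. Thus the $v_C$ are not a priori linearly independent, and one might worry about cancellations. However, linear independence is not needed: the statement concerns only which monomials appear in $\chi_D$, and positivity of a weight-space dimension follows from the existence of any single nonzero vector of that weight, never from independence of a full family. With that remark in place the argument is complete, and indeed the proof is as short as the word ``easy'' in the paper's lead-in suggests.
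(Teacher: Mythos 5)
Your proof is correct and is the standard argument: each generator $\prod_j\det\left(Y_{D_j}^{C_j}\right)$ is a weight vector of weight $x^{-C}$, nonzero exactly when $C\leq D$, and since character coefficients are weight-space dimensions no cancellation can occur, giving both containments. The paper itself offers no proof of this statement but cites \cite{FMS} for it, and the reasoning there is essentially the same weight-space computation, so your approach matches the intended one.
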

	
	\begin{corollary}
		\label{cor:coefficients}
		Let $D\subseteq [n]\times [n]$ be a diagram. Fix any diagram $C^{(1)}\leq D$ and set $\bm{m}=x^{C^{(1)}}$.
%		\[\bm{m}=\prod_{j=1}^{n}\prod_{i\in C^{(1)}_j}x_i\] 
		Suppose $C^{(1)}, \ldots, C^{(r)}$ are all the diagrams $C\leq D$ such that $x^C=\bm{m}$. Then, the coefficient of $\bm{m}$ in $\chi_D$ is equal to the number of linearly independent polynomials over $\mathbb{C}$ among $\displaystyle\left\{\prod_{j=1}^{n}\det\left(Y_{D_j}^{C^{(i)}_j}\right) \ \middle|\  i\in [r] \right\}$. 
	\end{corollary}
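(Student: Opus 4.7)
My plan is to decompose $\mathcal{M}_D$ into weight spaces under the action of the diagonal torus and then read off $\chi_D$ one weight at a time. The first step is a direct computation on the spanning polynomials. Using the $B$-action $f(Y)\cdot b = f(b^{-1}Y)$, the diagonal matrix $X=\mathrm{diag}(x_1,\ldots,x_n)$ sends $y_{ij}\mapsto x_i^{-1}y_{ij}$. Since every monomial of $\det(Y_{D_j}^{C_j})$ contains exactly one factor $y_{i,\ast}$ for each row $i\in C_j$, this determinant is scaled by $\prod_{i\in C_j}x_i^{-1}$. Multiplying over $j$, each generator satisfies
\[
\left(\prod_{j=1}^n\det(Y_{D_j}^{C_j})\right)\cdot X \;=\; (x^C)^{-1}\,\prod_{j=1}^n\det(Y_{D_j}^{C_j}),
\]
so each generator is a weight vector of weight $(x^C)^{-1}$.

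The second step is to identify the relevant weight space. Because the listed generators span $\mathcal{M}_D$ and each one lies in a single weight space determined by $x^C$, the $\bm{m}^{-1}$-weight space of $\mathcal{M}_D$ is spanned precisely by those generators $\prod_j\det(Y_{D_j}^{C^{(i)}_j})$ with $x^{C^{(i)}}=\bm{m}$, that is, by the $r$ polynomials listed in the statement. Its dimension is therefore the maximum size of a $\mathbb{C}$-linearly independent subset of this list.

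Finally, by definition $\chi_D(x_1,\ldots,x_n) = \mathrm{char}(\mathcal{M}_D)(x_1^{-1},\ldots,x_n^{-1})$, so the coefficient of $\bm{m}$ in $\chi_D$ equals the coefficient of $\bm{m}^{-1}$ in $\mathrm{char}(\mathcal{M}_D)$. The latter is the dimension of the $\bm{m}^{-1}$-weight space computed in the previous step, yielding the corollary.

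The only delicate point I anticipate is sign/inversion bookkeeping: I need to confirm that the inversion built into the definition of the dual character exactly cancels the inversion coming from the left-translation $B$-action, so that weight $(x^C)^{-1}$ on $\mathcal{M}_D$ really does contribute the monomial $x^C$ to $\chi_D$. Once that is verified, there is no real obstacle; the argument is essentially a direct weight-space decomposition and a count.
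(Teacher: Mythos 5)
Your argument is correct, and it is essentially the same reasoning the paper relies on: the paper states this corollary without proof as an easy consequence of the weight-space (eigenvector-of-$X$) structure, exactly the decomposition you carry out, and its worked example in Section 2 uses precisely this ``linearly independent eigenvectors of $X$'' count. Your final worry about the inversion bookkeeping is already resolved by your own computation: the generator has $X$-eigenvalue $(x^C)^{-1}$, and the substitution $x_i\mapsto x_i^{-1}$ in the definition of the dual character turns that weight into the monomial $x^C$, so the coefficient of $\bm{m}$ in $\chi_D$ is the dimension of the $\bm{m}^{-1}$-weight space, as you say.
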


%	The dual character of $\mathcal{M}_D$ has been shown in certain cases to be a Schubert polynomial \cite{KP} or a key polynomial \cite{flaggedLRrule}.

	\subsection{Schubert Polynomials}
	
	Recall the \emph{divided difference operators} $\partial_j$ for $j\in[n-1]$ are operators on the polynomial ring $\mathbb{C}[x_1,\ldots,x_n]$ defined by
	\[\partial_j(f)=\frac{f-(s_j\cdot f)}{x_j-x_{j+1}}
	=\frac{f(x_1,\ldots,x_n)-f(x_1,\ldots,x_{j+1},x_j,\ldots,x_n)}{x_j-x_{j+1}}.\]
	\begin{definition}
		The \emph{Schubert polynomial} $\mathfrak{S}_w$ of $w\in S_n$ is defined recursively on the weak Bruhat order. Let $w_0=n \hspace{.1cm} n\!-\!1 \hspace{.1cm} \cdots \hspace{.1cm} 2 \hspace{.1cm} 1 \in S_n$, the longest permutation in $S_n$. If $w\neq w_0$ then there is $j\in [n-1]$ with $w(j)<w(j+1)$ (called an \emph{ascent} of $w$). The polynomial $\mathfrak{S}_w$ is defined by
		\begin{align*}
		\mathfrak{S}_w=\begin{cases}
		x_1^{n-1}x_2^{n-2}\cdots x_{n-1}&\mbox{ if } w=w_0,\\
		\partial_j \mathfrak{S}_{ws_j} &\mbox{ if } w(j)<w(j+1).
		\end{cases}
		\end{align*}
	\end{definition}
	
	\begin{definition}
		The \emph{Rothe diagram} $D(w)$ of a permutation $w\in S_n$ is the diagram
		\[ D(w)=\{(i,j)\in [n]\times [n] \mid i<w^{-1}(j)\mbox{ and } j<w(i) \}. \]
	\end{definition}
	The diagram $D(w)$ can be visualized as the set of boxes left in the $n\times n$ grid 
	after you cross out all boxes weakly below $(i,w(i))$ in the same column, or weakly right of $(i,w(i))$ in the same row for each $i\in [n]$.
%	Note that Rothe diagrams have the \emph{northwest property}: If $(r,c'),(r',c)\in D(w)$ with $r<r'$ and $c<c'$, then $(r,c)\in D(w)$.
	
	\begin{example}
		If $w=31542$, then 
		\begin{center}
			\begin{tikzpicture}[scale=.55]
				\draw (0,0)--(5,0)--(5,5)--(0,5)--(0,0);
				\draw (1,0)--(1,5);
				\draw (2,0)--(2,5);
				\draw (3,0)--(3,5);
				\draw (4,0)--(4,5);
				\draw (0,1)--(5,1);
				\draw (0,2)--(5,2);
				\draw (0,3)--(5,3);
				\draw (0,4)--(5,4);
			%		\draw (5,4.5) -- (2.5,4.5) node {$\bullet$} -- (2.5,0);
			%		\draw (5,3.5) -- (0.5,3.5) node {$\bullet$} -- (0.5,0);
			%		\draw (5,2.5) -- (4.5,2.5) node {$\bullet$} -- (4.5,0);
			%		\draw (5,1.5) -- (3.5,1.5) node {$\bullet$} -- (3.5,0);
			%		\draw (5,0.5) -- (1.5,0.5) node {$\bullet$} -- (1.5,0);
				
				\fill[draw=black,fill=lightgray] (0,4)--(1,4)--(1,5)--(0,5)--(0,4);
				\fill[draw=black,fill=lightgray] (1,4)--(2,4)--(2,5)--(1,5)--(1,4);
				\fill[draw=black,fill=lightgray] (1,2)--(2,2)--(2,3)--(1,3)--(1,2);
				\fill[draw=black,fill=lightgray] (1,1)--(2,1)--(2,2)--(1,2)--(1,1);
				\fill[draw=black,fill=lightgray] (3,2)--(4,2)--(4,3)--(3,3)--(3,2);
				\node at (-1.5,2.5) {$D(w)=$};
				\node at (8.9,2.5) {$=(\{1\},\{1,3,4\},\emptyset,\{3\},\emptyset).$};
			\end{tikzpicture}
		\end{center}
	The Schubert polynomial of $w$ is computed by
	\[\mathfrak{S}_{w} = \partial_2\partial_1\partial_3\partial_2\partial_4 (x_1^4x_2^3x_3^2x_4). \]
	\end{example}

	Via Rothe diagrams, Schubert polynomials occur as special cases of dual characters of flagged Weyl modules:
	\begin{theorem}[\cite{KP}]
		\label{thm:kp}
		Let $w$ be a permutation with Rothe diagram $D(w)$. Then, 
		\[\mathfrak{S}_w = \chi_{D(w)}. \]
	\end{theorem}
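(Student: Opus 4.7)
The plan is to argue by downward induction on $\ell(w)$, showing that $\chi_{D(w)}$ obeys the same divided-difference recursion that defines Schubert polynomials. For the base case $w = w_0$, the Rothe diagram is the staircase $D(w_0) = \{(i,j) : i + j \leq n\}$, so column $j$ equals $\{1, \ldots, n-j\}$. The inequality $C \leq D(w_0)$ then forces $C_j = D(w_0)_j$ in each column, since a size-$(n-j)$ subset of $[n]$ whose $k$-th smallest element is at most $k$ must equal $\{1, \ldots, n-j\}$. Hence $\mathcal{M}_{D(w_0)}$ is one-dimensional, and using that $Y$ is upper triangular the single spanning determinant evaluates to $\prod_{j}\prod_{i=1}^{n-j}y_{ii}$, giving $\chi_{D(w_0)} = x^{D(w_0)} = x_1^{n-1} x_2^{n-2} \cdots x_{n-1} = \mathfrak{S}_{w_0}$.

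For the inductive step, suppose $\chi_{D(v)} = \mathfrak{S}_v$ for all $v$ with $\ell(v) > \ell(w)$. Fix an ascent $j$ of $w$ and set $w' = ws_j$, so $\ell(w') = \ell(w) + 1$ and $\chi_{D(w')} = \mathfrak{S}_{w'}$ by induction. Since the Schubert recursion gives $\mathfrak{S}_w = \partial_j \mathfrak{S}_{w'}$, it suffices to establish the parallel identity
\[ \chi_{D(w)} = \partial_j \chi_{D(w')}. \]
The combinatorial input is the explicit comparison of Rothe diagrams: writing $a = w(j)$ and $b = w(j+1)$, the diagrams $D(w)$ and $D(w')$ agree outside rows $j, j+1$; within those rows, $D(w')$ is obtained from $D(w)$ by (i) lifting each row-$(j+1)$ box in a column $c$ with $a < c < b$ up to row $j$, and (ii) adjoining the single new box $(j, a)$. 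Using this comparison, group the diagrams $C' \leq D(w')$ into $s_j$-orbits on their row-$j$/row-$(j+1)$ content and verify that $\partial_j \chi_{D(w')}$ collapses term by term to $\chi_{D(w)}$, with the multiplicities given by Corollary \ref{cor:coefficients} matched on both sides.

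The main obstacle is verifying this character identity cleanly. Naively pairing diagrams $C'$ under the $s_j$-swap works when each monomial arises from a single determinant, but in general one must track the rank of the span of determinants restricted to rows $j, j+1$ of $Y$, as different $C'$ may produce the same $x^{C'}$ via linearly dependent elements of $\mathcal{M}_{D(w')}$. The most robust route is to lift $\partial_j$ to the module level: construct an $s_j$-equivariant short exact sequence relating $\mathcal{M}_{D(w')}$ to $\mathcal{M}_{D(w)}$ and a ``remainder" module obtained by the row-swap, so that applying the character-$\partial_j$ dictionary yields the required equality. This converts the delicate multiplicity bookkeeping into a structural statement about $B$-modules and completes the induction.
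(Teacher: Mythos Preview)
The paper does not give its own proof of this theorem; it is quoted from Kra\'skiewicz--Pragacz \cite{KP} as background, so there is no in-paper argument to compare against.

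On its own merits, your proposal is a reasonable outline but not a proof. The base case is fine. In the inductive step you correctly recognize that the naive $s_j$-pairing of diagrams $C'\le D(w')$ does not control multiplicities, and you then defer the actual work to an unspecified ``$s_j$-equivariant short exact sequence'' relating $\mathcal{M}_{D(w')}$ to $\mathcal{M}_{D(w)}$. That sequence is exactly the nontrivial content of the theorem: one must exhibit a $B$-module map, identify its kernel and cokernel with the correct flagged Weyl modules (or twists thereof), and check that taking characters reproduces $\partial_j$. None of this is carried out, and it does not follow formally from the combinatorial description of $D(w)$ versus $D(w')$ that you give. As written, the final paragraph names the mechanism used in the literature (Demazure-type filtrations, as in \cite{KP} or Magyar's approach) rather than supplying it, so the inductive step remains a gap.
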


	\subsection{Key Polynomials}
	
	Key polynomials were first introduced by Demazure for Weyl groups \cite{demazure}, and studied in the context of the symmetric group by Lascoux and Sch\"utzenberger in \cite{LS1,LS2}. Recall the \emph{key polynomial} $\kappa_{\alpha}$ of a composition $\alpha=(\alpha_1,\alpha_2,\ldots)$ is defined as follows.
	When $\alpha$ is a partition, $\kappa_\alpha=x^\alpha$.
	Otherwise, suppose  $\alpha_i<\alpha_{i+1}$ for some $i$. Then
	\[
	\kappa_\alpha= \partial_i (x_i \kappa_{\beta}), \ \ \text{where $\beta=(\alpha_1,\ldots,\alpha_{i+1},\alpha_{i},\ldots).$}
	\]
	 
	\begin{definition}[\cite{HHL, MTY}]
		Fix a composition $\alpha$, and set 
		\[l=\max\{i:\,\alpha_i\neq 0 \} \quad\mbox{and} \quad n=\max\{l,\alpha_1,\ldots, \alpha_l \}.\] 
		The \emph{skyline diagram} of $\alpha$ is the diagram $D(\alpha)\subseteq[n]\times[n]$ containing the leftmost $\alpha_i$ boxes in row $i$ for each $i\in[n]$.
	\end{definition}

	\begin{example}
		If $\alpha=(3,2,0,1,1)$, then
		\begin{center}
			\begin{tikzpicture}[scale=.55]
			\draw (0,0)--(5,0)--(5,5)--(0,5)--(0,0);
			\draw (1,0)--(1,5);
			\draw (2,0)--(2,5);
			\draw (3,0)--(3,5);
			\draw (4,0)--(4,5);
			\draw (0,1)--(5,1);
			\draw (0,2)--(5,2);
			\draw (0,3)--(5,3);
			\draw (0,4)--(5,4);
			%		\draw (5,4.5) -- (2.5,4.5) node {$\bullet$} -- (2.5,0);
			%		\draw (5,3.5) -- (0.5,3.5) node {$\bullet$} -- (0.5,0);
			%		\draw (5,2.5) -- (4.5,2.5) node {$\bullet$} -- (4.5,0);
			%		\draw (5,1.5) -- (3.5,1.5) node {$\bullet$} -- (3.5,0);
			%		\draw (5,0.5) -- (1.5,0.5) node {$\bullet$} -- (1.5,0);
			
			\fill[draw=black,fill=lightgray] (0,4)--(1,4)--(1,5)--(0,5)--(0,4);
			\fill[draw=black,fill=lightgray] (1,4)--(2,4)--(2,5)--(1,5)--(1,4);
			\fill[draw=black,fill=lightgray] (2,4)--(3,4)--(3,5)--(2,5)--(2,4);
			
			\fill[draw=black,fill=lightgray] (0,3)--(1,3)--(1,4)--(0,4)--(0,3);
			\fill[draw=black,fill=lightgray] (1,3)--(2,3)--(2,4)--(1,4)--(1,3);
			
			\fill[draw=black,fill=lightgray] (0,1)--(1,1)--(1,2)--(0,2)--(0,1);
			
			\fill[draw=black,fill=lightgray] (0,0)--(1,0)--(1,1)--(0,1)--(0,0);
			
			\node at (-1.5,2.5) {$D(\alpha)=$};
			\node at (9.6,2.5) {$=(\{1,2,4,5\}, \{1,2\}, \{1\}, \emptyset, \emptyset).$};
			\end{tikzpicture}
		\end{center}
	The key polynomial of $\alpha$ is computed by
	\[\kappa_{\alpha} = \partial_3(x_3\partial_4(x_4(x_1^3x_2^2x_3x_4))). \]
	\end{example}

	\begin{theorem}[\cite{keypolynomials}]
		\label{thm:key}
		Let $\alpha$ be a composition with skyline diagram $D(\alpha)$. Then		
		\[\kappa_\alpha=\chi_{D(\alpha)}. \]
	\end{theorem}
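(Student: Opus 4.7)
The plan is to prove the identity $\kappa_\alpha=\chi_{D(\alpha)}$ by induction on $N(\alpha):=\#\{(i,j):i<j,\ \alpha_i<\alpha_j\}$, the number of strictly ascending pairs of $\alpha$. This quantity vanishes exactly when $\alpha$ is a partition (the base case of the key polynomial recursion), and decreases by $1$ under the swap $\alpha\mapsto\beta=(\alpha_1,\ldots,\alpha_{i+1},\alpha_i,\ldots)$ whenever $\alpha_i<\alpha_{i+1}$, so it matches the recursive structure used to define $\kappa_\alpha$.

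For the base case, when $\alpha=\lambda$ is a partition, $\kappa_\lambda=x^\lambda$ by definition. The skyline diagram $D(\lambda)$ has $j$-th column $\{1,2,\ldots,\lambda'_j\}$, where $\lambda'$ is the conjugate partition. For any $C\leq D(\lambda)$, each column $C_j$ has size $\lambda'_j$, and its $k$-th smallest element is bounded by the $k$-th smallest of $D(\lambda)_j$, namely $k$; this forces $C_j=\{1,\ldots,\lambda'_j\}=D(\lambda)_j$ for every $j$. Hence $D(\lambda)$ is the unique $C\leq D(\lambda)$, and Theorem \ref{thm:monomials} together with Corollary \ref{cor:coefficients} yields $\chi_{D(\lambda)}=x^\lambda=\kappa_\lambda$.

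For the inductive step, suppose $\alpha_i<\alpha_{i+1}$ and let $\beta=(\alpha_1,\ldots,\alpha_{i+1},\alpha_i,\ldots)$. Since $N(\beta)<N(\alpha)$, the inductive hypothesis gives $\chi_{D(\beta)}=\kappa_\beta$. Combined with the defining recursion $\kappa_\alpha=\partial_i(x_i\kappa_\beta)$, it suffices to establish the divided-difference identity
\[\chi_{D(\alpha)}=\partial_i\bigl(x_i\,\chi_{D(\beta)}\bigr).\]

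The main obstacle is establishing this last identity, and the plan for it is representation-theoretic. I would realize $\mathcal{M}_{D(\alpha)}$ and $\mathcal{M}_{D(\beta)}$ as Demazure-type modules sitting inside a common $GL_n$-representation, with $\mathcal{M}_{D(\beta)}\subset\mathcal{M}_{D(\alpha)}$ corresponding to a Bruhat-length-one difference associated with the simple reflection $s_i$. Demazure's character formula then translates this geometric relation into the operator $\partial_i\circ(x_i\cdot)$ on characters, producing the desired identity. The delicate ingredient is verifying that the row swap $i\leftrightarrow i+1$ taking $D(\beta)$ to $D(\alpha)$ (which, by the left-justified structure of skyline diagrams, alters only rows $i$ and $i+1$) corresponds precisely to the action of the nontrivial Weyl element of the simple $SL_2$ associated with the root $e_i-e_{i+1}$ on the determinants generating $\mathcal{M}_{D(\beta)}$. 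This reduces to a concrete computation with $2\times 2$ minors in rows $i$ and $i+1$ of the defining matrix $Y$, tractable because the columns of skyline diagrams have a particularly rigid form.
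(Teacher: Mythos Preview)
The paper does not prove this theorem; it is simply cited from \cite{keypolynomials} as a known result. So there is no ``paper's own proof'' to compare against.

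Your outline follows the natural strategy and is in spirit the one used in the literature: match the recursion defining $\kappa_\alpha$ by showing the base case for partitions and then a divided-difference identity for the inductive step. Your base case is correct and complete.

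The inductive step, however, is only a plan, and the plan as written has a gap. You assert $\mathcal{M}_{D(\beta)}\subset\mathcal{M}_{D(\alpha)}$, but in the determinantal construction used here the generators of $\mathcal{M}_{D(\beta)}$ are products $\prod_j\det(Y^{C_j}_{D(\beta)_j})$ while those of $\mathcal{M}_{D(\alpha)}$ use column sets $D(\alpha)_j$; for $\alpha_i<j\le\alpha_{i+1}$ these differ (one contains $i+1$, the other $i$), so the inclusion is not immediate and must be argued. More seriously, invoking Demazure's character formula presupposes that you have identified $\mathcal{M}_{D(\alpha)}$ with the Demazure module $V_{w(\alpha)}(\lambda)$ inside the irreducible $GL_n$-module of highest weight $\lambda=\mathrm{sort}(\alpha)$. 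That identification is essentially the content of the theorem (or at least equally hard), so appealing to it is close to circular. A self-contained argument would either establish that identification directly (this is what the cited reference does, and it requires real work: showing the determinantal span is $B$-stable with the right extremal weight and is generated over the appropriate parabolic), or bypass it by proving the operator identity $\chi_{D(\alpha)}=\partial_i(x_i\,\chi_{D(\beta)})$ via an explicit bijective or algebraic argument on the determinantal generators. Your final paragraph gestures at the latter but does not carry it out.
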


\section{A Lower Bound for $\chi_D(1,\ldots,1)$}
\label{sec:lower}
In this section, we prove a lower bound for the principal specialization of the dual character of any diagram. We then specialize this bound to Schubert and key polynomials.

\begin{definition}
Fix a diagram $D$. For each box $(i, j) \in D$, the \textit{rank} of that box is
\[
\textup{rank}_D(i, j) = \#\{ k \colon 1 \leq k \leq i \text{ and } (k, j) \notin D \}.
\]
The \textit{rank} of $D$ is
\[
\textup{rank}(D) = \sum_{(i, j) \in D} \textup{rank}_D(i, j).
\]
\end{definition}

%\begin{lemma} \acom{I don't think we even use this}
%If $C \leq D$ then $\textup{rank}(C) \leq \textup{rank}(D)$, and if $C < D$ then $\textup{rank}(C) < \textup{rank}(D)$.
%\end{lemma}
%\begin{proof}
%Let $C = (C_1, \dots, C_n)$ and $D = (D_1, \dots, D_n)$, so $C_j \leq D_j$ for all $j \in [n]$. Fix a column $j$. Then $C_j \leq D_j$ means we can write $C_j = \{ x_1, \dots, x_m \}$, $D_j = \{ y_1, \dots, y_m \}$, and $x_i \leq y_i$ for all $i \in [m]$. Note that
%\[
%\textup{rank}_C(x_i, j) = x_i - i \quad \text{and} \quad \textup{rank}_D(y_i, j) = y_i - i,
%\]
%so $\textup{rank}_C(x_i, j) \leq \textup{rank}_D(y_i, j)$ for all $i \in [m]$. Hence,
%\[
%\sum_{i \in C_j} \textup{rank}_C(i, j) \leq \sum_{i \in D_j} \textup{rank}_D(i, j).
%\]
%By summing across columns, we conclude that
%\[
%\sum_{(i, j) \in C} \textup{rank}_C(i, j) \leq \sum_{(i, j) \in D} \textup{rank}_D(i, j).
%\]
%
%Now, if $C < D$, then $C_j < D_j$ for at least one $j \in [n]$. For such a $j$, tracing the argument above tells us
%\[
%\sum_{i \in C_j} \textup{rank}_C(i, j) < \sum_{i \in D_j} \textup{rank}_D(i, j),
%\]
%and as a result
%\[
%\sum_{(i, j) \in C} \textup{rank}_C(i, j) < \sum_{(i, j) \in D} \textup{rank}_D(i, j).
%\]
%\end{proof}

\begin{lemma}
	\label{lem:chain}
	Let $D$ be a diagram and let $r = \textup{rank}(D)$. Then there are diagrams $C^0, C^1, \dots, C^{r-1}$ such that $C^0 < C^1 < \cdots < C^{r-1} < D$ and $\textup{rank}(C^k) = k$.
\end{lemma}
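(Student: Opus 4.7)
The plan is to build the chain inductively from below, starting at the unique rank-$0$ diagram with the column sizes of $D$ and moving a single entry up by one at each step, always staying under $D$.

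The key preliminary observation is a clean formula for rank. For any diagram $C$ with column $C_j = \{c_1^{(j)} < \cdots < c_{m_j}^{(j)}\}$, a direct count shows $\textup{rank}_C(c_i^{(j)},j) = c_i^{(j)} - i$, hence
\[
\textup{rank}(C) = \sum_j\sum_i c_i^{(j)} \;-\; \sum_j\binom{m_j+1}{2}.
\]
When $C \leq D$, the second sum agrees for $C$ and $D$, so $\textup{rank}(D) - \textup{rank}(C) = \sum_j\sum_i(d_i^{(j)} - c_i^{(j)})$, a nonnegative integer that vanishes if and only if $C = D$.

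Set $C^0_j = \{1, 2, \ldots, |D_j|\}$; this is top-justified, has rank $0$, and lies under $D$. For the inductive step, suppose $C^k \leq D$ with $\textup{rank}(C^k) = k < r$; then $C^k \neq D$, so I would pick a column $j$ with $C^k_j \neq D_j$ and let $i$ be the \emph{largest} index in that column with $c_i^{(j)} < d_i^{(j)}$. Define $C^{k+1}$ by replacing $c_i^{(j)}$ with $c_i^{(j)}+1$ and leaving every other entry alone. The one thing needing verification is that $c_i^{(j)}+1$ is not already an entry of $C^k_j$: by maximality of $i$, for any $i' > i$ we have $c_{i'}^{(j)} = d_{i'}^{(j)} \geq d_i^{(j)} + 1 \geq c_i^{(j)} + 2$. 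Everything else---that $C^k < C^{k+1} \leq D$ and that the rank increases by exactly one---follows immediately from the formula.

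Iterating $r-1$ times produces the chain $C^0 < C^1 < \cdots < C^{r-1}$, each still $\leq D$, with $\textup{rank}(C^k) = k$; and since $\textup{rank}(C^{r-1}) = r-1 < r$ we automatically get $C^{r-1} \neq D$, hence $C^{r-1} < D$ (the case $r = 0$ is vacuous). The main obstacle is really the choice of move in the inductive step: raising the \emph{last} still-deficient entry in a column simultaneously guarantees staying $\leq D$ and avoids collisions with the entries above, whereas a naive choice can fail on both counts.
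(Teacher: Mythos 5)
Your proof is correct, and it follows the same basic strategy as the paper's---build a saturated chain by moving one box one row at a time---but it runs in the opposite direction and with different bookkeeping. The paper starts at $D$ and descends: it treats a single nonempty column, moves the box just below the lowest gap up by one row (decreasing the rank by exactly one), and then gets the general case from the additivity of rank over columns, handling one column at a time. You instead start at the top-justified diagram $C^0$ with the same column sizes and ascend, and your closed formula $\textup{rank}(C)=\sum_{j}\sum_{i}c_i^{(j)}-\sum_j\binom{m_j+1}{2}$ makes the unit increment of rank, the inequality $\textup{rank}(C)\leq\textup{rank}(D)$ for $C\leq D$ with equality only when $C=D$, and the termination of the process all immediate; the only genuine verification is your maximality argument showing $c_i^{(j)}+1\notin C^k_j$, which is sound. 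The trade-off: the paper's column-by-column reduction is shorter to state, while your rank formula buys a slightly stronger fact for free (for $C\leq D$ the rank difference equals the total displacement $\sum_{j,i}(d_i^{(j)}-c_i^{(j)})$), from which the existence of the chain is essentially automatic.
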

\begin{proof}
	If $r = 0$ then the chain consists of just $D$ and there is nothing to prove. Assume $r > 0$. We begin with the case that $D$ has a single nonempty column. Without loss of generality, we may write $D = (D_1) = (\{a_1, \dots, a_m\})$. Since $\textup{rank}(D) > 0$, $D_1 \neq \{1, \dots, m\}$. Let $k$ be the largest integer less than $a_m$ such that $k\notin D_1$. Choose $i$ so that $a_i = k + 1$ (which must exist by definition of $k$). Define
	\[
	C_1 = \left( D_1 \setminus \{a_i\} \right) \cup \{k\}.
	\]
	Then $C_1 < D_1$, and $\textup{rank}(C_1) = \textup{rank}(D_1) - 1$. By induction, the result follows whenever $D$ has a single nonempty column. Since
	\[
	\textup{rank}((D_1, \dots, D_n)) = \sum_{j \in [n]} \textup{rank}((D_j)),
	\]
	the general case follows from the single column case by performing the above construction to one column at a time.
\end{proof}

Recall the \emph{inverse lexicographic order} on monomials: $x^a<_{\textup{invlex}}x^b$ if there exists $1\leq i \leq n$ such that $a_j=b_j$ for $i+1\leq j\leq n$, and $a_i<b_i$.
\begin{lemma}
\label{lem:obvious}
If $C < D$, then $x^C \neq x^D$.
\end{lemma}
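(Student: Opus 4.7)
My plan is to prove the stronger statement that $x^C <_{\textup{invlex}} x^D$ whenever $C < D$, which immediately yields $x^C \neq x^D$ and explains why the inverse lexicographic order is mentioned just before the lemma.

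First I will establish the single-column fact that for any subsets $R = \{r_1 < \cdots < r_m\}$ and $S = \{s_1 < \cdots < s_m\}$ of $[n]$ with $R \leq S$, and for every $i \in [n]$,
\[ N_{\geq i}(R) := \#\{r \in R : r \geq i\} \;\leq\; \#\{s \in S : s \geq i\} =: N_{\geq i}(S). \]
This is a one-line argument: since $r_k \leq s_k$, any $k$ with $s_k < i$ also has $r_k < i$, so $\#\{k : s_k < i\} \leq \#\{k : r_k < i\}$, and taking complements in a size-$m$ set gives the inequality. Moreover, if $R \neq S$ then for at least one $i$ the inequality is strict: otherwise $R$ and $S$ have identical characteristic functions, forcing $R = S$.

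Next I sum this over columns. For any diagram $E = (E_1, \dots, E_n)$, set $\nu_{\geq i}(E) = \sum_{j=1}^n N_{\geq i}(E_j)$. From $C \leq D$ I get $\nu_{\geq i}(C) \leq \nu_{\geq i}(D)$ for every $i$, and since $C \neq D$ there is some column in which the previous paragraph yields strict inequality for some $i$, so $\nu_{\geq i}(C) < \nu_{\geq i}(D)$ for at least one $i$.

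Finally, write $x^C = \prod_i x_i^{a_i}$ and $x^D = \prod_i x_i^{b_i}$, where $a_i = \#\{j : i \in C_j\}$ and $b_i = \#\{j : i \in D_j\}$, so that $\nu_{\geq i}(C) = \sum_{k \geq i} a_k$ and likewise for $D$. Let $i^* = \max\{i : \nu_{\geq i}(C) < \nu_{\geq i}(D)\}$, which exists by the previous step. For every $k > i^*$, $\nu_{\geq k}(C) = \nu_{\geq k}(D)$, so subtracting successive partial sums gives $a_k = b_k$. And
\[ a_{i^*} = \nu_{\geq i^*}(C) - \nu_{\geq i^*+1}(C) \;<\; \nu_{\geq i^*}(D) - \nu_{\geq i^*+1}(D) = b_{i^*}. \]
This is exactly the definition of $x^C <_{\textup{invlex}} x^D$. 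The only non-routine step is identifying the right statistic, namely the row-weight $\nu_{\geq i}$; once this is in hand the argument is a direct bookkeeping exercise, and the main point is that the column-wise partial order $R \leq S$ is designed precisely so that row-weights in this sense are monotone.
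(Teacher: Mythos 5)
Your proof is correct and establishes the same strengthened statement as the paper, namely $x^C <_{\textup{invlex}} x^D$ whenever $C < D$. The only difference is mechanical: the paper compares the columns' monomials in invlex order and multiplies the columnwise inequalities (implicitly using that invlex is compatible with monomial multiplication), whereas you verify the invlex inequality directly on the exponent vectors via the tail-count statistic $\nu_{\geq i}$ --- an equivalent, slightly more self-contained piece of bookkeeping.
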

\begin{proof}
Let $C = (C_1, \dots, C_n)$ and $D = (D_1, \dots, D_n)$, so $C_j \leq D_j$ for all $j \in [n]$. Fix a column $j$. Then $C_j \leq D_j$ means we can write $C_j = \{ a_1, \dots, a_m \}$ and $D_j = \{ b_1, \dots, b_m \}$ with $a_i \leq b_i$ for all $i \in [m]$. Consequently,
\[
\prod_{i \in C_j} x_i \leq_{\textup{invlex}} \prod_{i \in D_j} x_i.
\]
Since $C < D$, we know $C_j < D_j$ for at least one $j \in [n]$. For any such $j$, we have
\[
\prod_{i \in C_j} x_i <_{\textup{invlex}} \prod_{i \in D_j} x_i,
\]
so
\[
x^C=\prod_{j=1}^n \prod_{i \in C_j} x_i <_{\textup{invlex}} \prod_{j=1}^n \prod_{i \in D_j} x_i=x^D.
\]
In particular $x^C\neq x^D$.
\end{proof}

\begin{namedtheorem}[\ref{thm:main}]
	For any diagram $D$, 
	\[\chi_D(1, \dots, 1) \geq \textup{rank}(D) + 1.\]
\end{namedtheorem}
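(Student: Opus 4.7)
The plan is to combine the two preceding lemmas with Theorem~\ref{thm:monomials} to count distinct monomials in $\chi_D$. Specifically, by Theorem~\ref{thm:monomials} every monomial $x^C$ with $C \le D$ appears in $\chi_D$, and each such monomial has coefficient at least $1$. Hence
\[
\chi_D(1,\ldots,1) \;\ge\; \#\{\text{distinct monomials appearing in }\chi_D\}.
\]
So it suffices to exhibit $\mathrm{rank}(D)+1$ diagrams $C \le D$ whose monomials $x^C$ are pairwise distinct.

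For this, I would invoke Lemma~\ref{lem:chain} to produce a strict chain $C^0 < C^1 < \cdots < C^{r-1} < D$ in the partial order $\le$ on diagrams, where $r = \mathrm{rank}(D)$. This supplies $r+1$ diagrams bounded above by $D$. Then I would apply Lemma~\ref{lem:obvious} not merely to adjacent pairs but to every pair $(C^i, C^j)$ with $i < j$ (and to $(C^i, D)$): since $<$ is transitive on diagrams and the lemma's proof actually establishes the sharper statement $x^{C^i} <_{\textup{invlex}} x^{C^j}$, the $r+1$ monomials $x^{C^0}, x^{C^1}, \ldots, x^{C^{r-1}}, x^D$ form a strictly increasing chain in a total order, and are therefore pairwise distinct.

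Putting these ingredients together, $\chi_D$ contains at least $r+1$ distinct monomials, each with positive integer coefficient, so $\chi_D(1,\ldots,1) \ge r + 1 = \mathrm{rank}(D)+1$, as desired.

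There is no real obstacle here once the two lemmas are in hand; the whole argument is essentially a bookkeeping step. The substantive content has been packaged into Lemma~\ref{lem:chain} (constructing the chain, which is the main combinatorial input) and Lemma~\ref{lem:obvious} (ensuring monomials separate the partial order via inverse lex). The only subtlety worth flagging in writing the proof is to note that Lemma~\ref{lem:obvious} gives a strict inequality in a \emph{total} order on monomials, so a chain of length $r+1$ of diagrams automatically yields $r+1$ distinct monomials without needing to check non-adjacent pairs separately.
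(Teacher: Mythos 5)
Your argument is correct and is essentially the paper's own proof: both combine Theorem~\ref{thm:monomials}, the chain from Lemma~\ref{lem:chain}, and Lemma~\ref{lem:obvious} (applied across all pairs in the chain, which is licensed by transitivity of $<$ exactly as you note) to produce $\mathrm{rank}(D)+1$ distinct monomials in $\chi_D$. Your remark about the strict inverse-lex inequality just makes explicit a point the paper leaves implicit.
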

\begin{proof}
By Theorem~\ref{thm:monomials},
\begin{align*} 
%\label{eq:_main_ineq_1}
\chi_D(1, \dots, 1) \geq \# \left\{ x^C \;\middle|\; C \leq D \right\}.
\end{align*}
By Lemma~\ref{lem:chain}, there exists a chain of $r=\textup{rank}(D) + 1$ diagrams $C^0 < C^1 < \cdots < C^{r - 1} < D$. Thus, by Lemma~\ref{lem:obvious}, 
\begin{align*} 
%\label{eq:_main_ineq_2}
\# \left\{ x^C \;\middle|\; C \leq D \right\} \geq \# \left\{ x^C \;\middle|\; C \in \{C^0, C^1, \dots, C^{r-1}, D\} \right\} = r+1=\textup{rank}(D) + 1.
\end{align*}
\end{proof}

By specializing Theorem \ref{thm:main} to Rothe diagrams, we obtain a new proof of Theorem \ref{thm:schubprinspec}:
\begin{corollary}[{\cite[Theorem 1.1]{anna132}}]
	\label{cor:annarelation}
	For any permutation $w\in S_n$, 
	\[\mathfrak{S}_w(1,\ldots,1)\geq 1+p_{132}(w), \]
	where $p_{132}(w)$ is the number of 132-patterns in $w$.
\end{corollary}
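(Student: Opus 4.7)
The plan is to apply Theorem~\ref{thm:main} to the Rothe diagram $D(w)$ and then show that $\textup{rank}(D(w))$ equals $p_{132}(w)$. By Theorem~\ref{thm:kp}, we have $\mathfrak{S}_w(1,\ldots,1) = \chi_{D(w)}(1,\ldots,1)$, and Theorem~\ref{thm:main} gives $\chi_{D(w)}(1,\ldots,1) \geq \textup{rank}(D(w)) + 1$. So everything reduces to identifying $\textup{rank}(D(w))$ combinatorially.

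First I would simplify $\textup{rank}_{D(w)}(i,j)$ for $(i,j)\in D(w)$. Recall $(k,j)\notin D(w)$ means $k\geq w^{-1}(j)$ or $w(k)\leq j$. For $k\leq i < w^{-1}(j)$ the first condition fails, and $w(k)=j$ is impossible (it would force $k = w^{-1}(j) > i$). Hence
\[
\textup{rank}_{D(w)}(i,j) = \#\{k : 1\leq k \leq i,\ w(k) < j\}.
\]
Summing over boxes of $D(w)$,
\[
\textup{rank}(D(w)) = \#\bigl\{(k,i,j) : k \leq i,\ w(k) < j,\ i < w^{-1}(j),\ j < w(i)\bigr\}.
\]

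Next I would set up a bijection with $132$-patterns. Given a contributing triple $(k,i,j)$, let $a=k$, $b=i$, $d=w^{-1}(j)$, so that $j=w(d)$. The condition $w(k)<j<w(i)$ together with $w(a)<w(d)<w(b)$ rules out $a=b$, and the inequalities $i<w^{-1}(j)$ give $a<b<d$; moreover $w(a)<w(d)<w(b)$, which is exactly a $132$-pattern. Conversely, a $132$-pattern at positions $a<b<d$ with $w(a)<w(d)<w(b)$ produces the triple $(k,i,j)=(a,b,w(d))$, and one checks each of the four conditions above is satisfied. This bijection proves $\textup{rank}(D(w)) = p_{132}(w)$, and combining with the inequality from Theorem~\ref{thm:main} gives the corollary.

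The only mildly delicate step is the bijection's verification: making sure the case $a=b$ is impossible and that the inverse map really lands back in the set of contributing triples (in particular checking $a<b$ rather than $a\leq b$, and that $j=w(d)$ is not achieved by a smaller position). These follow quickly from the inequalities, so I do not expect a substantive obstacle beyond careful bookkeeping.
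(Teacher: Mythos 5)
Your proposal is correct and takes essentially the same route as the paper: the paper's proof also reduces to showing $\textup{rank}(D(w)) = p_{132}(w)$ via the transparent bijection between $132$-patterns and the triples counted by $\textup{rank}(D(w))$, which you have simply written out in full detail. Your explicit verification (including ruling out $a=b$) is a sound elaboration of what the paper leaves as an easy observation.
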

\begin{proof}
	It is enough to show that $p_{132}(w) = \textup{rank}(D(w))$. By viewing 132-patterns of $w$ graphically in $D(w)$, one easily observes that 132-patterns are in transparent bijection with tuples $(i,j,k)$ such that $(i,j)\in D(w)$, $1\leq k<i$, and $(k,j)\notin D(w)$. The quantity $\textup{rank}(D)$ exactly counts these tuples.
\end{proof}

By specializing Theorem \ref{thm:main} to skyline diagrams, we obtain an analogous result for key polynomials. For a composition $\alpha$, let $\mathrm{rinv}(\alpha)$ denote the set of \emph{right inversions} of $\alpha$, the pairs $i<j$ such that $\alpha_i<\alpha_j$.

\begin{corollary}
	\label{cor:keys}
	For any composition $\alpha$, 
	\[\kappa_\alpha(1,\ldots,1)\geq 1+\sum_{(i,j)\in \mathrm{rinv}(\alpha)} (\alpha_j-\alpha_i). \]
\end{corollary}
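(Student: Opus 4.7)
The plan is to apply Theorem~\ref{thm:main} to the skyline diagram $D(\alpha)$ and invoke Theorem~\ref{thm:key} to identify $\chi_{D(\alpha)}$ with $\kappa_\alpha$. All that then remains is to verify the combinatorial identity
\[
\textup{rank}(D(\alpha)) \;=\; \sum_{(i,j)\in \mathrm{rinv}(\alpha)} (\alpha_j-\alpha_i),
\]
after which the corollary follows by the exact same template as Corollary~\ref{cor:annarelation}.

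To check the identity, I would first unfold what it means for a box to belong to $D(\alpha)$: by definition $(i,j)\in D(\alpha)$ if and only if $j\leq\alpha_i$, and consequently $(k,j)\notin D(\alpha)$ if and only if $\alpha_k<j$. Plugging this into the definition of $\textup{rank}_{D(\alpha)}(i,j)$ and summing over boxes gives
\[
\textup{rank}(D(\alpha)) \;=\; \#\{(k,i,j) : k<i,\ \alpha_k<j\leq \alpha_i\}.
\]
Next I would reorganize the sum by fixing the pair $(k,i)$ with $k<i$: the count of valid $j$ is $\max(0,\alpha_i-\alpha_k)$, which is nonzero precisely when $\alpha_k<\alpha_i$, i.e.\ when $(k,i)$ is a right inversion of $\alpha$. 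Renaming $(k,i)$ as $(i,j)$ yields the claimed identity.

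Finally, chaining everything together: by Theorem~\ref{thm:key}, $\kappa_\alpha(1,\ldots,1)=\chi_{D(\alpha)}(1,\ldots,1)$; by Theorem~\ref{thm:main}, this is at least $\textup{rank}(D(\alpha))+1$; and by the identity above this equals $1+\sum_{(i,j)\in \mathrm{rinv}(\alpha)}(\alpha_j-\alpha_i)$. There is no real obstacle here, since the argument is just a change of variable in the definition of $\textup{rank}$; the only place to be slightly careful is to note that the row-indexed description of $D(\alpha)$ must be translated into the column-indexed data $(D_1,\ldots,D_n)$ used in the rank definition, but this translation is immediate from $(i,j)\in D(\alpha)\iff j\leq\alpha_i$.
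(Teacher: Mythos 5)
Your proposal is correct and matches the paper's (implicit) argument: the paper derives Corollary~\ref{cor:keys} exactly by specializing Theorem~\ref{thm:main} to the skyline diagram via Theorem~\ref{thm:key}, with the only content being the identity $\textup{rank}(D(\alpha))=\sum_{(i,j)\in\mathrm{rinv}(\alpha)}(\alpha_j-\alpha_i)$, which you verify correctly by counting triples $(k,i,j)$ with $k<i$ and $\alpha_k<j\leq\alpha_i$, in direct analogy with the proof of Corollary~\ref{cor:annarelation}.
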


We now characterize the case of equality in Theorem \ref{thm:main}.

\begin{definition}
	Let $D$ be any diagram. A pair of boxes $(i,j),(i',j')\in D$ is called an \emph{unstable pair} if
	\begin{itemize}
		\item $\mathrm{rank}_D(i,j)\geq 1$;
		\item $\mathrm{rank}_D(i',j')\geq 1$;
		\item If $i=i'$ or $j=j'$, then $\mathrm{rank}_D(i,j)+\mathrm{rank}_D(i',j')\geq 3$.
	\end{itemize}
\end{definition}

\begin{proposition}
	A diagram $D$ satisfies $\chi_D(1, \dots, 1) = \textup{rank}(D) + 1$ if and only if $D$ does not contain an unstable pair.
\end{proposition}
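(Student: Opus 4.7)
The plan is to prove both directions separately via a structural analysis of $D$.

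For the direction ``no unstable pair $\Rightarrow$ equality,'' I would first observe that the absence of unstable pairs severely constrains $D$: any two rank-$\geq 1$ boxes in different rows and columns automatically form an unstable pair, and any two in the same row or column must have rank sum at most $2$, forcing each to have rank exactly $1$ when multiple such boxes exist. Hence $D$ falls into one of three classes: (I) at most one box has rank $\geq 1$ (of arbitrary rank); (II) all rank-$\geq 1$ boxes lie in a single row, each of rank $1$; (III) all rank-$\geq 1$ boxes lie in a single column, each of rank $1$. In class (I), with the unique rank-$r$ box at $(i^*, j^*)$, the column $D_{j^*}$ must equal $\{1, \ldots, m-1, i^*\}$ and every other column is an initial segment $\{1, \ldots, m_j\}$; the sets $C \leq D$ are parametrized by the last entry of $C_{j^*}$, giving $r+1$ distinct monomials each with coefficient $1$. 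In class (III), the unique rank-$1$ column has shape $\{1, \ldots, s_0, s_0+2, \ldots, s_0+1+t\}$ and admits $t+1 = r+1$ valid $C_{j^*}$'s with distinct monomials. Class (II) is subtler: each of the $t$ columns carrying a rank-$1$ box in row $i^*$ has shape $\{1, \ldots, i^*-2, i^*\}$, each admitting two choices for $C_{j_s}$, producing $2^t$ total $C$'s but only $t+1$ distinct monomials. To verify the coefficients, I would invoke Corollary~\ref{cor:coefficients} and note that the determinants $\det(Y_{D_{j_s}}^{C_{j_s}})$ factor as $(y_{11} \cdots y_{i^*-2, i^*-2}) \cdot y_{\varepsilon, i^*}$ with $\varepsilon \in \{i^*-1, i^*\}$, so the product over all $s$ depends only on the multiset of $\varepsilon$-choices. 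Consequently, all $\binom{t}{k}$ configurations producing a given monomial yield the same polynomial, so the coefficient is $1$.

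For the converse direction, I would prove the contrapositive: if $D$ has an unstable pair then $\chi_D(1, \ldots, 1) > \textup{rank}(D)+1$. By the proof of Theorem~\ref{thm:main}, it suffices to show $\#\{x^C : C \leq D\} > r + 1$. A key reduction is that $\textup{rank}(C)$ is determined by $x^C$: since $\textup{rank}(C) = \sum_j \sum_{b \in C_j} b - \sum_j \binom{m_j+1}{2}$ and the first sum equals $\sum_i i \alpha_i$ where $\alpha_i$ counts boxes of $C$ in row $i$ (recoverable from $x^C$), the rank is a function of $x^C$ alone. Hence distinct ranks yield distinct monomials, and the problem reduces to finding a rank hosting $\geq 2$ distinct monomials. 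I would case-split on the unstable pair: if it lies within a single column $j^*$, I would show the single-column poset $\{C_{j^*} : C_{j^*} \leq D_{j^*}\}$ fails to be a chain by exhibiting two incomparable $C_{j^*}, C_{j^*}'$ of equal rank, obtained by distributing the rank drop between the two boxes differently (for example, lowering a rank-$\geq 2$ box by $2$ versus lowering two rank-$\geq 1$ boxes by $1$ each); extending trivially to other columns gives two same-rank $C \leq D$ with different monomials. If the unstable pair spans columns $j \neq j'$, I would fix $C_{j''} = D_{j''}$ for $j'' \neq j, j'$ and vary $C_j, C_{j'}$ through the column chains of Lemma~\ref{lem:chain} indexed by $(a,b) \in [0, r_j] \times [0, r_{j'}]$, then exhibit two pairs with $a+b = a'+b'$ whose monomial products $x^{C_j^a} \cdot x^{C_{j'}^b}$ and $x^{C_j^{a'}} \cdot x^{C_{j'}^{b'}}$ differ.

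The main obstacle I expect lies in the multi-column case of the converse: verifying that the two column-chains produce distinct monomial products at some shared rank. The generic behaviour is that each chain step modifies the monomial by a ratio $x_p/x_{p+1}$ with $p$ depending on the column's structure, and these ratios typically differ between the two columns; but handling degenerate configurations where the two columns have very similar shape may require a finer case analysis of which row swaps the two chains perform at each step.
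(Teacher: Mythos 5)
Your overall strategy is the same as the paper's: for the equality direction, classify unstable-pair-free diagrams into (a single box of arbitrary positive rank) / (rank-one boxes all in one column) / (rank-one boxes all in one row) and compute $\chi_D(1,\dots,1)$ in each class; for the converse, exhibit two diagrams $C,C'\leq D$ of equal rank with $x^C\neq x^{C'}$. Your equality direction is in fact more detailed than the paper's sketch and is correct: the forced column shapes, the counts of $C\leq D$, and the factorization of the determinants in the row case (so that all $\binom{t}{k}$ choices give literally the same polynomial, whence coefficient $1$ via Corollary~\ref{cor:coefficients}) all check out. Your observation that $\textup{rank}(C)$ is a function of $x^C$ alone is also exactly what is needed to turn ``two equal-rank diagrams with distinct monomials'' into $\chi_D(1,\dots,1)\geq \textup{rank}(D)+2$, a step the paper leaves implicit.

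The genuine gap is the cross-column case of the converse, which you leave as a plan; and the ``generic behaviour'' you hope for (first chain steps in the two columns alter the monomial by different ratios) really does fail for admissible unstable pairs, so the deferred analysis cannot be skipped. For instance, take $D_j=\{1,\dots,u-1,u+1\}$ and $D_{j'}=\{1,\dots,u-1,u+1,u+2\}$: the boxes $(u+1,j)$ and $(u+2,j')$ each have rank $1$ and lie in different rows and columns, hence form an unstable pair, yet the only single-step drop in either column is $u+1\mapsto u$, so the two depth-one diagrams carry the same monomial. The repair is short and you should include it: if the two columns admit single drops $u+1\mapsto u$ and $v+1\mapsto v$ with $u\neq v$, compare those two diagrams; otherwise both columns have a unique drop position, the same $u$, and then at least one of the two columns has column rank $\geq 2$ (if both had rank $1$, both boxes of the pair would sit in row $u+1$ with rank sum $2$, so the pair would not be unstable). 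Now compare any rank-two drop performed in that column alone (the other column, left equal to $D_{j'}$, still contains $u+1$) with the simultaneous single drops in both columns (which delete $u+1$ from both); the $x_{u+1}$-degrees differ, giving two equal-rank diagrams with distinct monomials. A smaller repair is needed in your single-column case as well: ``lowering a rank-$\geq 2$ box by $2$'' can collide with the element above it (e.g.\ $D_j=\{2,4\}$), so argue instead with rank vectors: if two boxes have ranks $\geq 1$ summing to $\geq 3$, the nondecreasing vectors $(0,\dots,0,1,1)$ and $(0,\dots,0,2)$ are both componentwise below the column's rank vector, yielding two distinct, equal-rank column subsets and hence distinct monomials.
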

\begin{proof}
	Suppose $D$ contains an unstable pair $\{(i,j),(i',j')\}$. A simple case analysis shows one can move boxes in $D$ upwards to create diagrams $C,C'\leq D$ of the same rank with $x^C\neq x^{C'}$. This implies $\chi_D(1,\ldots,1)\neq \mathrm{rank}(D)+1$.
	
	Assume $D$ contains no unstable pair. If $\mathrm{rank}(D)=0$, then the result follows. Pick $(i,j)\in D$ with $\mathrm{rank}_D(i,j)\geq 1$. If $\mathrm{rank}_D(i,j)>1$, then any other positive rank box would form an unstable pair with $(i,j)$. Hence $(i,j)$ is the only positive rank box of $D$, and the result follows easily.
	
	Suppose $\mathrm{rank}_{D}(i,j)=1$. To avoid unstable pairs, all other positive rank boxes of $D$ either lie in row $i$, or they all lie in column $j$. In either case, they must all have rank exactly 1. If all positive rank boxes of $D$ lie in column $i$, then one observes there is a unique diagram $C\leq D$ with rank $k$ for each $k=0,1,\ldots, \mathrm{rank}(D)$, implying the result.
	
	If all positive rank boxes of $D$ lie in row $j$, then one observes that all diagrams $C\leq D$ of a fixed rank have the same monomial $x^C$, and their determinants span an eigenspace of dimension one in the flagged Weyl module.
\end{proof}

We now relate equality in Theorem \ref{thm:main} with the question of when $\chi_D$ is zero-one. Recall a polynomial $f$ is called \emph{zero-one} if all nonzero coefficients in $f$ equal $1$.
\begin{proposition}
	If a diagram $D$ satisfies $\chi_D(1, \dots, 1) = \textup{rank}(D) + 1$, then $\chi_D$ is zero-one.
\end{proposition}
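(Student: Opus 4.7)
The plan is to extract the statement from a careful re-examination of the proof of Theorem~\ref{thm:main}, observing that all of the inequalities in that proof must be equalities under the hypothesis $\chi_D(1,\dots,1)=\textup{rank}(D)+1$. In particular, the equality case will force every coefficient in $\chi_D$ to equal $1$.

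More concretely, I would begin by writing
\[
\chi_D(1,\dots,1) \;=\; \sum_{\bm m} c_{\bm m},
\]
where the sum runs over the distinct monomials $\bm m$ appearing in $\chi_D$ and $c_{\bm m}$ denotes the coefficient of $\bm m$. By Theorem~\ref{thm:monomials}, the monomials $\bm m$ appearing in $\chi_D$ are exactly those of the form $x^C$ for some $C\leq D$, and Corollary~\ref{cor:coefficients} ensures $c_{\bm m}\geq 1$ for each such $\bm m$ (there is always at least one diagram $C\leq D$ with $x^C=\bm m$, contributing a nonzero determinant). Hence
\[
\chi_D(1,\dots,1) \;\geq\; \#\bigl\{x^C : C\leq D\bigr\}.
\]

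Next, I would invoke the chain of diagrams from Lemma~\ref{lem:chain}, namely $C^0<C^1<\cdots<C^{r-1}<D$ with $r=\textup{rank}(D)$, and apply Lemma~\ref{lem:obvious} to any two elements of this chain (they are comparable since the chain is totally ordered) to conclude that the monomials $x^{C^0},x^{C^1},\dots,x^{C^{r-1}},x^D$ are pairwise distinct. Therefore
\[
\#\bigl\{x^C : C\leq D\bigr\} \;\geq\; \textup{rank}(D)+1.
\]
Combining the two displayed inequalities yields Theorem~\ref{thm:main}, with equality $\chi_D(1,\dots,1)=\textup{rank}(D)+1$ forcing equality in both. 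Equality in the first display forces $c_{\bm m}=1$ for every monomial $\bm m$ of $\chi_D$, which is precisely the statement that $\chi_D$ is zero-one.

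There is no real obstacle here; the argument is a one-line consequence of observing that the proof of Theorem~\ref{thm:main} is a chain of inequalities $\chi_D(1,\dots,1)\geq\#\{x^C\}\geq\textup{rank}(D)+1$ in which the first step uses only $c_{\bm m}\geq 1$. The only point requiring any care is verifying that each monomial of $\chi_D$ truly has coefficient at least $1$, which is immediate from Corollary~\ref{cor:coefficients} since the singleton family $\{\prod_j\det(Y^{C_j}_{D_j})\}$ for any $C\leq D$ with $x^C=\bm m$ is linearly independent.
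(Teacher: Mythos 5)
Your proof is correct and is essentially the paper's argument: both extract the zero-one property by analyzing the equality case of the chain of inequalities $\chi_D(1,\dots,1)\geq \#\left\{x^C \mid C\leq D\right\}\geq \textup{rank}(D)+1$ underlying the proof of Theorem~\ref{thm:main}, using that each monomial of $\chi_D$ has coefficient at least $1$. If anything, your version is slightly more streamlined, since the paper phrases the conclusion via eigenspaces and routes through the auxiliary observation that diagrams $C,C'\leq D$ with $x^C=x^{C'}$ have equal rank, which your pinching argument does not need.
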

\begin{proof}
	In order for $\chi_D(1, \dots, 1) = \textup{rank}(D) + 1$, it must happen that all diagrams $C\leq D$ with a fixed rank induce the same monomial $x^C$ and have dependent determinants in the flagged Weyl module. Since all diagrams $C,C'\leq D$ with $x^C=x^{C'}$ must have the same rank, it follows that all eigenspaces in the flagged Weyl module of $D$ have dimension one.
\end{proof}

We now provide a conjectural characterization of diagrams $D$ such that $\chi_D$ is zero-one. Consider the six box configurations shown in Figure \ref{fig:01configs}. In each configuration, an $\times$ (red) indicates the absence of a box; a shaded square (gray) indicates the presence of a box; and an unshaded square (white) indicates no restriction on the presence or absence of a box.
\begin{definition}
	Let $D$ be any diagram. We say $D$ contains a \emph{multiplicitous configuration} if there are $r_1<r_2<r_3<r_4$ and $c_1<c_2$ so that $D$ restricted to rows $\{r_1,r_2,r_3,r_4\}$ and columns $\{c_1,c_2\}$ equals one of the configurations 
%	I, II, III, IV, V, VI 
	shown in Figure \ref{fig:01configs}, up to possibly swapping the order of the columns.
\end{definition}

\begin{figure}[ht]
	\begin{center}
		\includegraphics[scale=1.2]{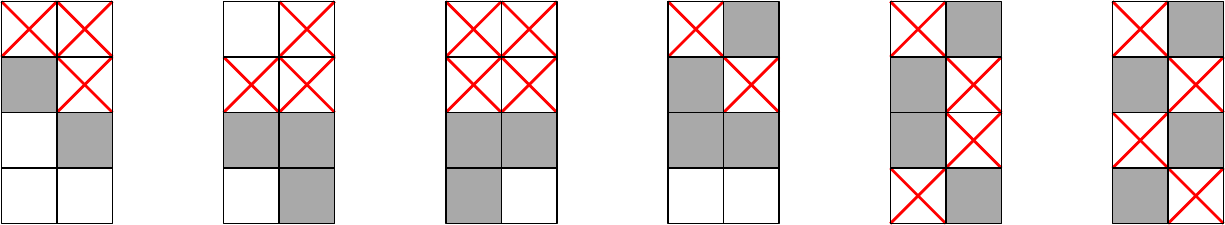}
	\end{center}
	\caption{The six multiplicitous configurations.}
	\label{fig:01configs}
\end{figure}

\begin{example}
	Consider the diagrams $D$ and $D'$ shown in Figure \ref{fig:expandnonexp}. The diagram $D$ does not contain instances of any multiplicitous configurations. The diagram $D'$ contains instances of each of the multiplicitous configurations.
\end{example}

\begin{figure}[ht]
	\begin{center}
		\includegraphics{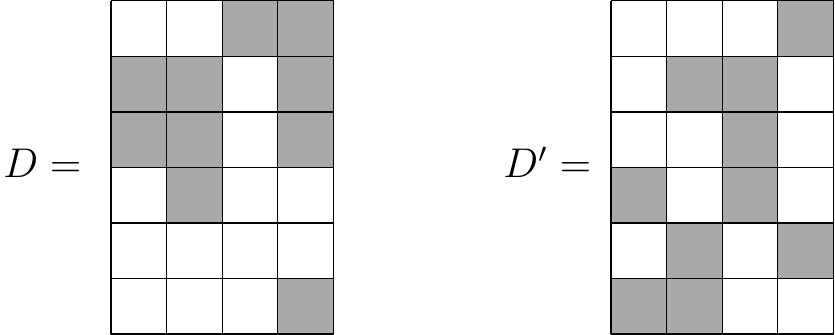}
	\end{center}
	\caption{}
	\label{fig:expandnonexp}
\end{figure}

\begin{proposition}
	\label{prop:01forward}
	If a diagram $D$ contains a multiplicitous configuration, then $\chi_D$ is not zero-one.
\end{proposition}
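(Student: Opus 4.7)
The plan is to use Corollary~\ref{cor:coefficients}: to show $\chi_D$ is not zero-one, it suffices to exhibit two diagrams $C, C' \leq D$ with $x^C = x^{C'}$ such that the polynomials
\[
f_C := \prod_{j=1}^n \det\!\bigl(Y_{D_j}^{C_j}\bigr) \quad\text{and}\quad f_{C'} := \prod_{j=1}^n \det\!\bigl(Y_{D_j}^{C'_j}\bigr)
\]
are linearly independent in $\mathbb{C}[Y]$. I would construct such a pair from the multiplicitous configuration via a case analysis over the six patterns in Figure~\ref{fig:01configs}.

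First, I would fix the four rows $R = \{r_1 < r_2 < r_3 < r_4\}$ and the two columns $c_1 < c_2$ of the configuration, and arrange that $C$ and $C'$ agree with $D$ in every column outside $\{c_1, c_2\}$, and differ from $D$ inside these columns only by sliding boxes in $R$ upward into empty positions of $R$. Then $f_C$ and $f_{C'}$ share the common factor $\prod_{j \notin \{c_1,c_2\}} \det(Y_{D_j}^{D_j})$, so linear independence of $f_C$ and $f_{C'}$ reduces to that of the two-column products $\det(Y_{D_{c_1}}^{C_{c_1}})\det(Y_{D_{c_2}}^{C_{c_2}})$ versus their primed analogues. Since the rows of $C_{c_i}$ and $C'_{c_i}$ outside $R$ also agree with those of $D_{c_i}$, a Laplace expansion of each column determinant along the rows in $R$ further strips off a common tail minor, and what remains is a finite linear-independence check involving only the entries $y_{ij}$ with $i \in R$ and $j \in D_{c_1} \cup D_{c_2}$.

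Second, for each of the six configurations I would write down an explicit pair $(C,C')$ by prescribing coordinated upward box moves in $c_1$ and $c_2$. For instance, in a configuration where $D$ has boxes at $(r_1,c_1),(r_4,c_1),(r_1,c_2),(r_4,c_2)$ with $r_2,r_3$ empty in both columns, one can take $C$ to move $(r_4,c_1)\to r_2$ and $(r_4,c_2)\to r_3$, and $C'$ to move $(r_4,c_1)\to r_3$ and $(r_4,c_2)\to r_2$. In every case one checks $x^C = x^{C'}$ directly, and then identifies a $y$-monomial (read off the $2\times 2$ or $3\times 3$ minor expansions from the reduction above) which appears with different coefficients in the two products, certifying linear independence.

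The main obstacle is that naively-symmetric pairs $(C,C')$ can collapse: whenever $D_{c_1}\cap R = D_{c_2}\cap R$ and the moves are merely swapped between columns, the two products turn out to be equal (essentially because $\det(M_1)\det(M_2) = \det(M_2)\det(M_1)$). The patterns in Figure~\ref{fig:01configs} are designed so that a forced shaded box or a forced red $\times$ breaks this column symmetry, and the heart of the argument is identifying, for each of the six cases, the asymmetry that forces the distinguishing $y$-monomial to appear in exactly one of the two products. Once this asymmetry is pinpointed, the independence verification is a short inspection per configuration.
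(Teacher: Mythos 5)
Your overall strategy (produce $C,C'\leq D$ with $x^C=x^{C'}$ and linearly independent determinant products, via Corollary~\ref{cor:coefficients}) is sound in principle, and pulling out the common factor $\prod_{j\notin\{c_1,c_2\}}\det(Y_{D_j}^{D_j})$ is fine since $\mathbb{C}[Y]$ is a domain. But the next reduction is where the real content lies, and your justification of it is wrong as stated: Laplace expansion of $\det(Y_{D_{c_i}}^{C_{c_i}})$ along the rows in $R$ does not ``strip off a common tail minor'' --- it produces a \emph{sum} over column subsets of products of complementary minors, so there is no factorization and no a priori reduction to the variables $y_{ij}$ with $i\in R$. This global-to-local step is exactly what the paper outsources to the restriction theorem \cite[Theorem 5.8]{zeroone} (restricting $D$ to a set of rows and columns can only decrease the largest coefficient of the dual character). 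If you want to avoid citing that, you need an honest substitute, e.g.\ the linear map given by specializing $y_{ij}\mapsto\delta_{ij}$ for all rows $i\notin R$: under it each $\det(Y_{D_{c_i}}^{C_{c_i}})$ becomes (up to sign) the minor on rows $C_{c_i}\cap R$ and columns $D_{c_i}\cap R$, the outside columns become nonzero monomials, and linear independence of the images implies linear independence of the originals. You also need to verify that lifting local moves preserves $C\leq D$ globally; this is true but requires the (easy) lemma that $A\leq B$ implies $A\cup E\leq B\cup E$ for $E$ disjoint from $R$.

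The second gap is that the case analysis, which you yourself call the heart of the argument, is never carried out: no witness pair $(C,C')$ is actually exhibited for any of the six configurations of Figure~\ref{fig:01configs}, and the one example you do write down is precisely one where the two products coincide. Asserting that ``the asymmetry can be pinpointed'' in each case is not a proof; one must check, configuration by configuration, that some monomial class has two linearly independent products --- equivalently, that the dual character of each $4\times 2$ configuration is not zero-one. That finite check (done directly on the six small diagrams), combined with the restriction theorem, is the paper's entire proof; your proposal essentially re-derives the needed special case of the restriction theorem by hand but leaves both the reduction and the six verifications incomplete.
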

\begin{proof}
	It follows immediately from \cite[Theorem 5.8]{zeroone} that if the restriction of a diagram $D$ to rows $\{i_1,\ldots i_p\}$ and columns $\{j_1,\ldots,j_q\}$ equals a diagram $D'$, then the largest coefficient appearing in $\chi_D$ is bounded below by the largest coefficient appearing in $\chi_{D'}$. One easily checks that the dual characters of each of the multiplicitous configurations are not zero-one.
\end{proof}

\begin{conjecture}
	\label{conj:01backward}
	If $D$ is a diagram such that $\chi_D$ is not zero-one, then $D$ contains a multiplicitous configuration.
\end{conjecture}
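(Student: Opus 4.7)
The plan is to prove the contrapositive: assuming $D$ contains no multiplicitous configuration, I show that $\chi_D$ is zero-one. By Corollary~\ref{cor:coefficients}, this amounts to proving that for each monomial $\bm{m}$ appearing in $\chi_D$, the determinant products $\prod_{j=1}^n \det\bigl(Y_{D_j}^{C_j}\bigr)$, ranging over all $C \leq D$ with $x^C = \bm{m}$, span a subspace of $\mathbb{C}[Y]$ of dimension at most one.

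I would argue by minimal counterexample: choose $D$ without any multiplicitous configuration but with some coefficient of $\chi_D$ at least $2$, minimizing the number of occupied rows plus the number of occupied columns. The restriction principle from \cite[Theorem~5.8]{zeroone}, already used in Proposition~\ref{prop:01forward}, guarantees that any diagram obtained from $D$ by restricting to a proper subset of rows and columns has zero-one dual character; hence in the minimal counterexample, any pair $C \neq C' \leq D$ witnessing a coefficient $\geq 2$ must differ in every row and every column of $D$. The next step is to collapse $D$ to two columns. The entries $y_{ij}$ appearing in $\det\bigl(Y_{D_j}^{C_j}\bigr)$ are indexed by the single column $j$, so distinct columns of $Y$ use disjoint sets of variables, and the product $\prod_j \det\bigl(Y_{D_j}^{C_j}\bigr)$ factors naturally across columns of $\mathbb{C}[Y]$. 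Using $x^C = x^{C'}$ to perform partial exchanges restricted to pairs of columns on which $C$ and $C'$ differ, I expect to isolate a pair of columns that already supports linearly independent determinant products, and then minimality forces $D$ to consist of exactly those two columns.

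Once $D$ has two columns, the problem becomes finite and combinatorial: diagrams $C \leq D$ are indexed by pairs of row subsets, the constraint $x^C = x^{C'}$ is a condition on their row multiset union, and linear independence of the two products in $\mathbb{C}[Y]$ can be decided by expanding each $\det\bigl(Y_{D_j}^{C_j}\bigr)$ explicitly. I would enumerate, up to further restriction of rows, the minimal 2-column diagrams admitting witnesses with linearly independent determinant products, and verify that every such minimal obstruction restricts to one of the six configurations of Figure~\ref{fig:01configs}. The main obstacle is the multi-column to two-column reduction: a~priori, three or more columns could conspire to produce independent products even when each individual pair of columns is fine. Resolving this will likely require a Plücker- or Laplace-expansion style identity in $\mathbb{C}[Y]$, combined with careful bookkeeping of the row multisets that appear under overlapping column swaps, and I expect that is where the bulk of the difficulty will concentrate.
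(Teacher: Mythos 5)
The statement you are addressing is Conjecture~\ref{conj:01backward}: the paper does not prove it (only the converse direction, Proposition~\ref{prop:01forward}, is established there), so the question is whether your sketch actually closes it, and it does not. The most concrete problem is the factorization claim on which your reduction rests. The entries of $\det\bigl(Y_{D_j}^{C_j}\bigr)$ are the variables $y_{ik}$ with $i\in C_j$ and $k\in D_j$; they are indexed by the column \emph{set} $D_j$ of $Y$, not by the single diagram-column index $j$. Two diagram columns $j\neq j'$ with $D_j\cap D_{j'}\neq\emptyset$ (for instance, repeated columns) share variables, so the product $\prod_j\det\bigl(Y_{D_j}^{C_j}\bigr)$ does not split into factors in pairwise disjoint variables, and linear independence cannot be tested column pair by column pair. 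The one piece of this that does survive is cancellation of a common factor: if $C_j=C'_j$ then, since $\mathbb{C}[Y]$ is a domain, independence of the full products implies independence after deleting column $j$; but that only removes columns on which the two witnesses agree and gives no route to a genuine two-column reduction. Relatedly, your minimality step needs the reverse of the restriction principle you cite: \cite[Theorem 5.8]{zeroone} transfers a large coefficient from a restriction \emph{up} to $D$, whereas concluding that a witness pair in a minimal counterexample differs in every occupied row requires pushing the witness \emph{down} to a row-restricted diagram, which you have not justified.

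Beyond that, the two remaining steps are not routine. The two-column case is not finite: a two-column diagram can have arbitrarily many rows, so ``enumerate the minimal obstructions'' must be replaced by a structural proof that any two-column diagram with a coefficient at least $2$ restricts to one of the six $4\times 2$ configurations of Figure~\ref{fig:01configs} --- itself an infinite-family claim. And the multi-column-to-two-column reduction, which you yourself flag as the locus of difficulty and propose to handle with an unspecified Pl\"ucker/Laplace identity, is precisely the open content of the conjecture. As it stands, the proposal is a reasonable program, not a proof, and the conjecture remains open.
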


Together, Proposition \ref{prop:01forward} and Conjecture \ref{conj:01backward} specialize to known results for Schubert and key polynomials: \cite[Theorem 1]{zeroone} when $D$ is the Rothe diagram of a permutation, and \cite[Theorem 1.1]{multfreekey} when $D$ is the skyline diagram of a composition.

\section{An Upper Bound for $\chi_D(1,\ldots,1)$}
\label{sec:upper}
In this final section, we recall a trivial upper bound for the principal specialization of the dual character of any diagram. We make a conjecture for the case of equality. From Corollary \ref{cor:coefficients}, it follows immediately that if $c_\alpha$ is the coefficient of $x^\alpha$ in $\chi_D$, then
\[c_\alpha \leq \# \{C\leq D \mid x^C=x^\alpha \}. \]
In particular, 
\[\chi_D(1,\ldots,1) \leq \#\{ C\mid C\leq D\}. \]

Fan and Guo gave the following characterization for equality when the diagram $D$  is northwest. Recall a diagram $D$ is \emph{northwest} if whenever $(i,j),(i',j')\in D$ with $i>i'$ and $j<j'$, one has $(i',j)\in D$.

\begin{theorem}[\cite{upperbound}]
	\label{thm:fanguoupperbound}
	For any northwest diagram $D$, 
	\[\chi_D(1,\ldots,1) = \#\{C\mid C\leq D \} \]
	if and only if $D$ contains no instance of the configuration shown in Figure \ref{fig:fanguoconfig}.
\end{theorem}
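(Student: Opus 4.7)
The plan is to analyze equality through the coefficient characterization of Corollary~\ref{cor:coefficients}. The upper bound $\chi_D(1,\ldots,1) \leq \#\{C \mid C \leq D\}$ becomes an equality precisely when, for every monomial $x^\alpha$, the polynomials $\prod_{j=1}^n \det(Y_{D_j}^{C_j})$ over all $C \leq D$ with $x^C = x^\alpha$ are linearly independent in $\mathcal{M}_D$. I would split the proof accordingly.

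For the forward direction, I would prove the contrapositive: if $D$ contains the pattern of Figure~\ref{fig:fanguoconfig}, then equality fails. Mirroring the strategy of Proposition~\ref{prop:01forward}, I would invoke the restriction result \cite[Theorem 5.8]{zeroone}, or a quantitative refinement of it that tracks the gap between a coefficient and the corresponding count of subdiagrams, to reduce the problem to the forbidden configuration itself. A direct calculation on that small diagram should exhibit two distinct subdiagrams $C, C' \leq D$ with $x^C = x^{C'}$ whose column-determinant products satisfy a nontrivial linear relation, and this dependency would lift back to $D$, forcing strict inequality.

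For the backward direction, I would induct on $\#D$. The northwest condition supplies strong rigidity: lowering a box to form some $C \leq D$ largely fixes the boxes above and to its left. I would look for a corner of $D$ whose removal yields a smaller diagram that is still northwest and still avoids the forbidden configuration, then argue that any linear relation among determinants with a fixed monomial on $D$ descends to a relation on this smaller diagram. In the base case, the diagram is simple enough that linear independence is immediate from the column-determinant structure.

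The hard part is this backward direction. Column by column the independence is transparent, but the constraint $x^C = x^\alpha$ couples the column choices, so multi-column exchanges can in principle produce hidden relations. Ruling these out seems to require a tight normal-form lemma: in a northwest diagram with no forbidden configuration, any two $C, C' \leq D$ with the same monomial should differ by a sequence of elementary swaps, each of which lifts to an honest distinction in $\mathcal{M}_D$. Establishing such a normal form, and verifying that the forbidden configuration in Figure~\ref{fig:fanguoconfig} is \emph{exactly} the obstruction to carrying out these swaps, is where I expect the main combinatorial effort to concentrate.
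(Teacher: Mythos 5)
This statement is not proved in the paper at all: Theorem~\ref{thm:fanguoupperbound} is quoted from Fan and Guo \cite{upperbound}, so there is no internal argument to compare against. Judged on its own terms, your proposal is a plan rather than a proof, and both halves of the plan have genuine gaps. For the forward direction, the tool you invoke, \cite[Theorem 5.8]{zeroone} as used in Proposition~\ref{prop:01forward}, only transfers \emph{lower bounds on coefficients} from a restricted diagram up to $D$. What you actually need, by Corollary~\ref{cor:coefficients}, is that for some weight $\alpha$ the coefficient $c_\alpha$ in $\chi_D$ is \emph{strictly smaller} than $\#\{C\leq D \mid x^C=x^\alpha\}$, i.e.\ a linear dependence among the determinant products of the full diagram $D$ within a single weight space. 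A coefficient deficit in the small configuration does not obviously propagate, because both the coefficient and the count of subdiagrams change when you pass from the configuration to $D$; the ``quantitative refinement that tracks the gap'' you mention is exactly the missing ingredient, and lifting a relation among determinants of the restricted diagram to a relation for $D$ is not something the restriction theorem provides.

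The backward direction is in worse shape: you concede that the ``normal-form lemma'' (any two $C,C'\leq D$ with $x^C=x^{C'}$ differ by elementary swaps, each visible as an honest independence in $\mathcal{M}_D$, with the configuration of Figure~\ref{fig:fanguoconfig} as the exact obstruction) is where the main effort lies, and you do not establish it. The inductive skeleton also has unverified steps: it is not shown that a northwest, configuration-avoiding diagram always has a removable corner preserving both properties, nor that a linear relation among weight-homogeneous determinant products for $D$ descends to one for the smaller diagram --- deleting a box changes both the module and the set of subdiagrams, and (in)dependence does not pass down automatically. So while the overall framing via Corollary~\ref{cor:coefficients} (equality holds iff each weight space has its determinants linearly independent) is the right starting point, the proposal as written does not constitute a proof of either implication; for a complete argument one should consult \cite{upperbound}.
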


\begin{figure}[hb]
	\includegraphics{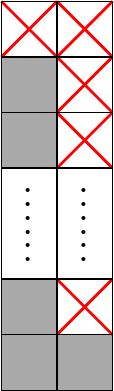}
	\caption{}
	\label{fig:fanguoconfig}
\end{figure}

We conjecturally extend Theorem \ref{thm:fanguoupperbound} to all diagrams. 
\begin{conjecture}
	Let $D$ be any diagram. Then $\chi_D(1,\ldots,1) = \# \{ C\mid C\leq D\}$ if and only if $D$ contains no instance of the configuration shown in Figure \ref{fig:maxconfig}.
\end{conjecture}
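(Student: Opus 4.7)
The plan is to reduce the conjecture to a linear-independence statement and then attack it on two fronts. The starting point is to observe that from Corollary~\ref{cor:coefficients}, the inequality $\chi_D(1,\ldots,1)\leq \#\{C\mid C\leq D\}$ is an equality if and only if, for every monomial $\bm m$, all of the determinants $\prod_{j}\det(Y^{C_j}_{D_j})$ over the diagrams $C\leq D$ with $x^C=\bm m$ are linearly independent in $\mathbb{C}[Y]$. Equivalently, equality fails exactly when some collection of such $C$'s contributes a nontrivial linear relation. So the conjecture amounts to characterizing diagrams with no such ``hidden'' dependencies in terms of a forbidden configuration.

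For the forward direction (containment implies strict inequality), I would mimic the strategy of Proposition~\ref{prop:01forward}. First, verify by direct calculation on the four-row, two-column piece shown in Figure~\ref{fig:maxconfig} that it yields a strict inequality $\chi_F(1,\ldots,1)<\#\{C\mid C\leq F\}$; by swapping one box between the two columns, one exhibits two diagrams $C,C'\leq F$ with the same monomial whose determinants are linearly dependent. Then invoke a restriction principle in the spirit of \cite[Theorem~5.8]{zeroone}: if $D$ restricted to rows $\{r_1,r_2,r_3,r_4\}$ and columns $\{c_1,c_2\}$ equals $F$, then the dependence among minors in $F$ lifts to a dependence among the corresponding products of minors in $D$, because the extra row and column factors are common to both sides of the relation and can be factored out. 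This produces a collection of $C\leq D$ with common monomial whose determinants are linearly dependent, forcing strict inequality.

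For the backward direction (absence of configuration implies equality), the goal is to show that no linear dependence can occur among the determinants indexed by diagrams $C\leq D$ sharing a fixed monomial. The plan is to work column by column, reducing a hypothetical dependence to a pair $C\neq C'\leq D$ with $x^C=x^{C'}$ that differ in the smallest possible way. The key observation is that any such minimal pair must swap elements between two specific columns $c_1<c_2$, involving rows $r_1<r_2<r_3<r_4$ in a pattern that reconstructs a copy of the forbidden configuration in $D$; this reduces to checking how two multisets of rows of equal product $\prod x_i$ can arise from the inequality $C_{c_1}\leq D_{c_1},C_{c_2}\leq D_{c_2}$. Treating the minimal-column-support case with a direct combinatorial classification, and then propagating via induction on the number of involved columns, should yield the result. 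This is the main obstacle: the forward direction is local, but the backward direction requires a global argument that every potential dependence ``witnesses'' the configuration, and the challenge lies in controlling the ways boxes can migrate across non-adjacent columns without ever producing one of the six multiplicitous patterns of Figure~\ref{fig:01configs} along the way.

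A natural sanity check is that the argument should specialize, in the northwest case, to Fan and Guo's Theorem~\ref{thm:fanguoupperbound}, so I would first confirm that the single configuration in Figure~\ref{fig:maxconfig} restricted to northwest diagrams reduces to Fan--Guo's configuration in Figure~\ref{fig:fanguoconfig}. This compatibility would both corroborate the conjecture and suggest that the backward direction can be proved by bootstrapping from the northwest case: given an arbitrary $D$ with no instance of the configuration, find a northwest subdiagram controlling all potential linear dependences and apply Theorem~\ref{thm:fanguoupperbound} there, then show that the complementary boxes of $D$ cannot introduce new dependences without creating the forbidden pattern.
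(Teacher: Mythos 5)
This statement is an open conjecture in the paper: the authors give no proof, so there is nothing to compare your argument against, and your proposal does not itself close the gap. The forward direction is the easier half and your plan for it is reasonable, but even there the ``lifting'' step is not automatic: for columns $c_1,c_2$ the minors in $D$ are taken with respect to the full row sets $D_{c_1},D_{c_2}$, which in general contain rows outside $\{r_1,r_2,r_3,r_4\}$, so a linear relation among the minors of the restricted $4\times 2$ configuration does not simply ``factor out'' to a relation among the corresponding products of minors in $D$. You would need a restriction principle adapted to the deficiency $\#\{C\leq D\}-\chi_D(1,\ldots,1)$; the cited result \cite[Theorem 5.8]{zeroone} concerns largest coefficients and does not directly provide this, though an analogous argument may well work.

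The substantive gap is the backward direction, which you acknowledge is only a hope. Your reduction of an arbitrary linear dependence to a ``minimal pair'' $C\neq C'$ with $x^C=x^{C'}$ differing by a single swap between two columns is unsupported and, as stated, attacks the wrong invariant: the existence of two distinct diagrams with the same monomial is neither necessary nor sufficient for a dependence, since such pairs can (and typically do) have linearly independent determinants, while a genuine dependence may only involve three or more diagrams and no distinguished minimal pair. The conjecture is really a statement about the dimension of the span of products of minors within each eigenspace, and nothing in the proposal controls that dimension; the induction ``on the number of involved columns'' and the proposed bootstrap from the northwest case (Theorem \ref{thm:fanguoupperbound}) are not accompanied by any mechanism for ruling out dependences created by the interaction of non-northwest boxes. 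The compatibility check with Fan--Guo is a sensible sanity test, but it is evidence, not an argument. As it stands, the proposal is a research plan for an open problem, not a proof.
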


\begin{figure}[hb]
	\includegraphics{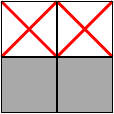}
	\caption{}
	\label{fig:maxconfig}
\end{figure}

%\section*{Acknowledgments} 
%We are grateful to Allen Knutson and Ricky Ini Liu for helpful discussions, and to Alex Fink for a careful reading.

\bibliographystyle{plain}
\bibliography{132-biblio}

\end{document}